\DeclarePairedDelimiter{\ceil}{\lceil}{\rceil}
 \newcommand{\grad}{\nabla}
\newcommand{\rn}{\mathbb{R}^{n}}
\newcommand{\re}{\mathbb{R}}
\newtheorem{theorem}{Theorem}[section]
\newtheorem{lemma}[theorem]{Lemma}
\newtheorem{proposition}[theorem]{Proposition}
\newtheorem{corollary}[theorem]{Corollary}
\newtheorem*{theorem*}{Theorem}
\theoremstyle{definition}
\newtheorem{definition}[theorem]{Definition}
\newtheorem{remark}[theorem]{Remark}
\newtheorem{example}[theorem]{Example}
\numberwithin{equation}{section}
\begin{document}
		
\title[Fractional Moser-Trudinger Inequality]{Remarks on the Fractional Moser-Trudinger Inequality}

\maketitle
\centerline{\scshape  Firoj Sk}

\medskip 
{\footnotesize
  \centerline{Indian Institute of Technology,  Kanpur, India.}
 \centerline{E-mail: firoj@iitk.ac.in}
 }
 
 \smallskip

\begin{abstract}
    In this article, we study the connection between the fractional Moser-Trudinger inequality and the fractional $\left(\frac{kp}{p-1},p\right)$-Poincar\'e type inequality for any Euclidean domain and discuss the sharpness of this inequality whose analogous results are well known in the local case. We further provide sufficient conditions on domains for fractional $(q,p)$-Poincar\'e type inequalities to hold.  We also derive Adachi-Tanaka type inequalities in the non-local setting.
\end{abstract}

\smallskip

 \keywords{\textit{Keywords:} \ Fractional Moser-Trudinger inequality; fractional Poincar\'e type inequality;
fractional-Sobolev spaces; unbounded domains.}
\smallskip

\subjclass{\textit{Subject Classification}\ {26D10; 35A23; 46E35.}}
\date{}

\section{Introduction}\label{section:introduction}
The classical Sobolev continuous embedding asserts that for a bounded domain $\Omega$ in $\rn$
\[
W^{1,p}_0(\Omega)\xhookrightarrow{}L^{q}(\Omega)\;\;\text{ for }1\leq q\leq p^*,
\]
where $p^*=\frac{np}{n-p}$ is the Sobolev critical exponent, $1\leq p<n$ and the spaces $L^p(\Omega)$, $W^{1,p}_0(\Omega)$ denote the usual Lebesgue space and Sobolev space respectively. When $p=n$ then the embedding 
\[
W^{1,n}_0(\Omega)\xhookrightarrow{}L^{\infty}(\Omega)
\]
is not true, as one can see by taking the function $u(x)=\log(1-\log|x|)$, $x\in B(0,1)\subset\re^2$. In this direction, Trudinger \cite{Trudinger} proved that there exists $\alpha>0$ such that 
\begin{equation}\label{trudinger}
     \sup_{\substack{u\in\ W_0^{1,n}(\Omega)\\ ||\grad u||_{L^n(\Omega)}\leq 1}}\int_{\Omega}e^{\alpha|u(x)|^{\frac{n}{n-1}}}\;dx<\infty.
\end{equation}
Moser, in \cite{moser}, was able to give the precise value of the optimal constant $\alpha_n=n\;\omega_{n-1}^{\frac{1}{n-1}}$ where $\omega_{n-1}$ denotes the $(n-1)$-dimensional Hausdorff measure of the unit sphere $\mathbb{S}^{n-1}$ in $\rn$, for which the inequality \eqref{trudinger} is sharp, in the sense that it is true for each $\alpha\in[0,\alpha_n]$ and fails for $\alpha>\alpha_n$. The sharpened inequality \eqref{trudinger} is usually known as the Moser-Trudinger inequality. Moreover, if we enlarge the integrand in the Moser-Trudinger inequality \eqref{trudinger} by a suitable Borel measurable function, then it becomes infinite. Indeed
\begin{equation}\label{borel}
   \sup_{\substack{u\in\ W_0^{1,n}(\Omega)\\ ||\grad u||_{L^n(\Omega)}\leq 1}}\int_{\Omega}f(|u|)e^{\alpha_n |u(x)|^{\frac{n}{n-1}}}\;dx=\infty,  
\end{equation}
where $f:[0,\infty)\to[0,\infty)$ is any Borel measurable function such that $f(t)\to\infty$ as $t\to\infty.$
 In a seminal paper of Mancini-Sandeep (see, \cite{Man-San}), it was proved that for a simply connected domain in $\mathbb{R}^2$, the Moser-Trudinger inequality \eqref{trudinger} holds true if and only if the classical Poincar\'e inequality does. This equivalence result has been extended by Battaglia and Mancini \cite{Bat-Man} for any domain $\Omega\subset\rn$, $n\geq 3$. When $\Omega=\rn$, Adachi and Tanaka \cite{Adachi-Tanaka} proved that the following Moser-Trudinger inequality, which usually called a Adachi-Tanaka type Moser-Trudinger inequality
\begin{equation}\label{Adachi-Tanaka}
  \sup_{\substack{u\in W^{1,n}(\rn)\\||\nabla u||_{L^n(\rn)}\leq 1}}\frac{1}{||u||^n_{L^n(\rn)}}\int_{\rn}\Phi_n({\alpha|u(x)|^{\frac{n}{n-1}}})\;dx
    \begin{cases}<\infty, & 0<\alpha<\alpha_{n},\\
    =\infty, & \alpha\geq\alpha_{n},
    \end{cases}  
\end{equation}
where $\Phi_n(t):=e^t-\sum_{j=0}^{n-2}\frac{t^j}{j!}.$ It is noticeable that the above supremum is infinite at the critical level $\alpha=\alpha_n$ which is quite a surprise in this case. However, if we consider the full Sobolev norm instead of the Dirichlet norm, then the Adachi-Tanaka type Moser-Trudinger inequality \eqref{Adachi-Tanaka} is different. In this context, Ruf \cite{Ruf} $(n=2)$ and Li-Ruf \cite{Li-Ruf} ($n\geq 2$) proved that the following inequality holds true:
\begin{equation*}
 \sup_{\substack{u\in W^{1,n}(\rn)\\||u||_{W^{1,n}(\rn)}\leq 1}}\int_{\rn}\Phi_n({\alpha|u(x)|^{\frac{n}{n-1}}})\;dx
 \begin{cases}<\infty, & 0<\alpha\leq\alpha_{n},\\
 =\infty, & \alpha>\alpha_{n},
 \end{cases}
\end{equation*}
where $||u||_{W^{1,n}(\rn)}=\left(||u||_{L^n(\rn)}^n+||\nabla u||^n_{L^n(\rn)}\right)^{1/n}$ is the full Sobolev norm on $W^{1,n}(\rn)$. The Moser-Trudinger inequality has been interesting research topics for many authors. There is a vast literature available on the study of the Moser-Trudinger inequality. We refer to \cite{Adachi-Tanaka},  \cite{Adams}, \cite{Adi-San}, \cite{Cal-Ruf}, \cite{Gyu-Roy}, \cite{Li-Ruf}, \cite{Ruf} as some of the relevant works on the Moser-Trudinger inequality.
\smallskip

In this paper, we shall be concerned about such type of results in the case of the fractional Sobolev spaces. For $s\in(0,1),\;p\geq 1$ and let $\Omega\subseteq\rn$ be any open set, the fractional Sobolev space $W^{s,p}(\Omega)$ is defined as
  $$
  W^{s,p}(\Omega):=\bigg\{u\in L^p(\Omega):\int_{\Omega}\int_{\Omega}\frac{|u(x)-u(y)|^p}{|x-y|^{n+sp}}dxdy<\infty\bigg\};
  $$ 
endowed with the norm $||u||_{s,p,\Omega}:=(||u||^p_{L^p(\Omega)}+[u]^p_{s,p,\Omega})^{1/p}$, where  $$[u]_{s,p,\Omega}:=\Bigg(\int_{\Omega}\int_{\Omega}\frac{|u(x)-u(y)|^p}{|x-y|^{n+sp}}dxdy\Bigg)^{1/p}$$ is the Gagliardo semi-norm. The spaces $\tilde{W}^{s,p}_0(\Omega)$ and $W^{s,p}_0(\Omega)$ are defined as the closure of the space $(C^\infty_c(\Omega))$ with the norm $||\cdot||_{s,p,\rn}$ and $||\cdot||_{s,p,\Omega}$ respectively. If $\Omega$ is a bounded Lipschitz set, then we have 
\begin{equation*}
    \tilde{W}^{s,p}_0(\Omega)=\{u\in W^{s,p}(\rn):u=0\text{ in }\rn\setminus\Omega\}.
\end{equation*}
Moreover, if $sp\neq 1$ then $\tilde{W}^{s,p}_0(\Omega)=W^{s,p}_0(\Omega)$ in the sense of equivalent norm. For more details we refer to  \cite{Bras-Lin-Par}, \cite{Bras}, \cite{guide} and references therein. The connection between the fractional Sobolev spaces and the classical Sobolev spaces are now well known. Indeed, in a pioneering work by Bourgain, Brezis, and Mironescu \cite{BBM} (see also Maz'ya-Shaposhnikova \cite{Maz-Sha} for the limit case $s\to 0^+$) it was shown that \begin{equation*}
 \lim_{s\to 1^-}(1-s)[u]^p_{s,p,\rn}=K(p,n)||\nabla u||^p_{L^p(\Omega)}\text{ for }u\in W^{1,p}_0(\Omega), 
\end{equation*}
where the explicit value of the constant $K(p,n)$ (see for instance, \cite{Rossi}) is given by
\begin{equation}\label{constant K}
    K(p,n)=\frac{1}{p}\int_{\mathbb S^{n-1}}|\langle\sigma,\textbf{e}\rangle|^p\;d\mathcal{H}^{n-1}(\sigma)=\frac{2\pi^{\frac{n-1}{2}}\Gamma\big(\frac{p+1}{2}\big)}{p\Gamma\big(\frac{n+p}{2}\big)},\text{ where $\Gamma$ is usual gamma function}. 
\end{equation}
In 2016, Parini and Ruf \cite{Par-Ruf} addressed the fractional Moser-Trudinger inequality in the fractional Sobolev space $\tilde{W}^{s,p}_0(\Omega)$. But their result is not sharp, in the sense that the value of the optimal constant is not known, and remains a problem; an upper bound of the optimal constant is given explicitly. Namely, they proved the following theorem.
\begin{theorem*}[\textbf{A}]
Let $\Omega$ be a bounded open set of $\rn$ $(n\geq 2)$ with Lipschitz boundary, and $0<s<1,\;p>1$ such that $sp=n$. Then there exists $\alpha^*_{s,n}>0$ such that 
\begin{equation}\label{fmt for bdd domain}
     \sup_{\substack{u\in\tilde{W}_0^{s,p}(\Omega),\;[u]_{s,p,\rn}\leq 1}}\int_{\Omega}e^{\alpha|u(x)|^{\frac{n}{n-s}}}\;dx<\infty\;\; \text{ for }\alpha\in[0,\alpha^*_{s,n}).
\end{equation}
Moreover,
\begin{equation*}
   \sup_{\substack{u\in\tilde{W}_0^{s,p}(\Omega),\;[u]_{s,p,\rn}\leq 1}}\int_{\Omega}e^{\alpha|u(x)|^{\frac{n}{n-s}}}\;dx=\infty\;\; \text{ for }\alpha\in(\alpha^*_{s,n},\infty), 
\end{equation*}
where 
\begin{equation}\label{alpha* value}
\alpha^*_{s,n}:=n(\gamma_{s,n})^{\frac{s}{n-s}}=n\bigg(\frac{2\omega_{n-1}^2\Gamma(p+1)}{n!}\sum_{k=0}^{\infty}\frac{(n+k-1)!}{k!(n+2k)^p}\bigg)^{\frac{s}{n-s}},\; \omega_{n-1}=\frac{n\pi^{\frac{n}{2}}}{\Gamma(1+\frac{n}{2})}.
\end{equation}
\end{theorem*}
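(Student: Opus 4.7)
The plan is to adapt Moser's original argument to the nonlocal setting, in three stages. First, one reduces to radial, non-increasing test functions on a ball: enclosing $\Omega$ in a ball $B_R$ and extending $u$ by zero preserves $[u]_{s,p,\mathbb{R}^n}$, while the fractional P\'olya-Szeg\H{o} inequality of Almgren-Lieb type yields $[u^*]_{s,p,\mathbb{R}^n}\le [u]_{s,p,\mathbb{R}^n}$. Since the integrand in \eqref{fmt for bdd domain} depends only on $|u|$, one may replace $u$ by its symmetric decreasing rearrangement $u^*$ throughout.

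For finiteness at $\alpha<\alpha^*_{s,n}$, use $sp=n$ (equivalently $\tfrac{n}{n-s}=\tfrac{p}{p-1}$) to expand
\[
\int_\Omega e^{\alpha|u|^{n/(n-s)}}\,dx = \sum_{k=0}^{\infty}\frac{\alpha^k}{k!}\,\|u\|_{L^{kp/(p-1)}(\Omega)}^{kp/(p-1)}.
\]
The key input is a sharp fractional $(q,p)$-Poincar\'e inequality of Adams type on $B_R$, namely $\|u\|_{L^q(\Omega)}^p \le C\, q^{p-1}\gamma_{s,n}^{-1}[u]_{s,p,\mathbb{R}^n}^p$ for large $q$, with the constant $\gamma_{s,n}$ matching \eqref{alpha* value}. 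Substituting $q=kp/(p-1)$, invoking Stirling's formula for $k!$, and summing the resulting series shows convergence precisely when $\alpha<n\gamma_{s,n}^{s/(n-s)}=\alpha^*_{s,n}$.

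For divergence at $\alpha>\alpha^*_{s,n}$, build a fractional Moser concentrating sequence. For $\varepsilon\in(0,R)$, take the radial function
\[
u_\varepsilon(x)=\gamma_{s,n}^{-(n-s)/n}\,\min\bigl\{(\log(R/\varepsilon))^{(n-s)/n},\,(\log(R/|x|))^{(n-s)/n}\bigr\}
\]
on $B_R$ and zero outside. A careful computation gives $[u_\varepsilon]_{s,p,\mathbb{R}^n}^p=\log(R/\varepsilon)+O(1)$; expanding the Gagliardo double integral in spherical harmonics and evaluating the radial kernel explicitly produces exactly the series in \eqref{alpha* value}, thereby identifying $\gamma_{s,n}$. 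The normalized sequence $v_\varepsilon:=u_\varepsilon/[u_\varepsilon]_{s,p,\mathbb{R}^n}$ then satisfies $|v_\varepsilon|^{n/(n-s)}\ge \log(R/\varepsilon)/\gamma_{s,n}-C$ on $B_\varepsilon$, so
\[
\int_\Omega e^{\alpha|v_\varepsilon|^{n/(n-s)}}\,dx \;\ge\; c\,\varepsilon^{\,n-n\alpha/\alpha^*_{s,n}} \;\to\;\infty \qquad (\alpha>\alpha^*_{s,n}).
\]

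The main obstacle is the explicit evaluation of the Gagliardo seminorm of the Moser sequence. Unlike the local Dirichlet energy, this seminorm picks up substantial long-range interactions between the support region and its complement, so isolating the leading logarithmic singularity requires delicate spherical harmonic expansions with quantitative remainder control — this is precisely where the infinite series defining $\gamma_{s,n}$ appears. This same nonlocality is why the method yields $\alpha^*_{s,n}$ only as an \emph{upper} bound on the sharp constant (through the Moser sequence); matching it with the optimal constant in the $(q,p)$-Poincar\'e step of the finiteness part would be required to recover the true critical threshold, and this gap is the reason the theorem is stated for $\alpha\in[0,\alpha^*_{s,n})$ and $\alpha\in(\alpha^*_{s,n},\infty)$ with the endpoint left open.
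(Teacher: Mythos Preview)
Theorem~\textbf{A} is not proved in this paper; it is quoted as a result of Parini and Ruf \cite{Par-Ruf} and then used as a black box (for instance in the proofs of Theorems~\ref{result1}, \ref{result2}, and \ref{result3}). There is therefore no in-paper proof to compare your proposal against.

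On the merits of your sketch: the symmetrization reduction and the divergence part via the Moser sequence are correct and are indeed the strategy of \cite{Par-Ruf}; the computation identifying $\gamma_{s,n}$ as the leading coefficient of $[u_\varepsilon]_{s,p,\mathbb{R}^n}^p$ is exactly their Section~5, which the present paper also invokes (see the proofs of Theorems~\ref{result2} and~\ref{result3}). Your finiteness argument, however, rests on an inequality of the form $\|u\|_{L^q}^p \le C\,q^{p-1}\gamma_{s,n}^{-1}[u]_{s,p,\mathbb{R}^n}^p$ with the \emph{precise} leading constant $\gamma_{s,n}^{-1}$, and no such sharp estimate is available. If it were, summing the exponential series would give finiteness exactly up to the explicit $\alpha^*_{s,n}$ and the result would be sharp --- but the paper stresses, immediately before stating Theorem~\textbf{A}, that the optimal constant is unknown and the explicit value is only an upper bound for it. Your final paragraph concedes this, yet it is inconsistent with the earlier claim of a sharp $(q,p)$-Poincar\'e constant: you cannot simultaneously assert that constant and leave the threshold open. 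What can actually be proven (and what \cite{Par-Ruf} do) is finiteness for \emph{some} $\alpha>0$, with the explicit $\alpha^*_{s,n}$ entering only through the Moser sequence as an upper bound on the true critical exponent.
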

Iula \cite{Iula} proved the above theorem in one dimension and also discussed some improvement results on the fractional Moser-Trudinger inequality. Note that, in dimension $n=2$ as already mentioned in \cite{Par-Ruf}, $\lim_{\substack{s\to 1^-}}(1-s)\alpha^*_{s,2}=2\pi^2$ which is same as the optimal exponent $\alpha_2=4\pi$ in the local case, up to an appropriate constant which naturally appears in the study of asymptotic behavior of the semi-norm in the limiting case $s\to 1^-$.

We shall study extensions of the fractional Moser-Trudinger inequality \eqref{fmt for bdd domain} to domains in $\rn$ having infinite measure. In this case, it is easy to see that the integral in \eqref{fmt for bdd domain} is infinite, as the integrand is always greater than 1. So an obvious modification is that one should remove some terms of the Taylor series expansion of the exponential function. Namely, for $k\in\mathbb{N}$ we consider the following $k$-th order truncated exponential function
\begin{equation*}\label{xi function}
\Psi_k(x):=e^x-\sum_{j=0}^{k-1}\frac{x^j}{j!},
\end{equation*}
\noindent and if $\Omega\subseteq\rn$ define the fractional Moser's-like functional of order $k$ for $0<\alpha<\alpha^*_{s,n}$  as $$F_{\Omega,k}(u)=\int_{\Omega}\Psi_k(\alpha|u(x)|^{\frac{p}{p-1}})dx.$$
We shall denote the function $\Psi_k$ by $\Psi$ when $k=\ceil{p-1}$ for simplicity of notation, where $\ceil*{\cdot}$ denotes the usual ceiling function i.e. $\ceil*{p-1}$ is the smallest integer which is greater than or equal to $p-1$. We will say that the fractional Moser-Trudinger inequality holds for an unbounded domain $\Omega\subset\rn$ if for some $0<\alpha<\alpha^*_{s,n}$ we have
 \begin{equation}\label{fmt kth order}
 \sup_{\substack{u\in\tilde{W}_0^{s,p}(\Omega),\;[u]_{s,p,\rn}\leq 1}}F_{\Omega,k}(u)<\infty.
 \end{equation}
 In the spirit of the local result (see for instance, \cite{Bat-Man} remark 2.7), we would like to investigate the equivalence between the fractional Moser-Trudinger inequality \eqref{fmt kth order} and the fractional $\left(\frac{kp}{p-1},p\right)$-Poincar\'e type inequality given as follows:
\begin{equation}\label{suitable frac inequality}
    \lambda(\Omega,n,p,k):=\inf_{\substack{u\in\tilde{W}^{s,p}_0(\Omega)\\u\neq 0 }}\frac{\displaystyle\int_{\rn}\int_{\rn}\frac{|u(x)-u(y)|^p}{|x-y|^{2n}}dxdy}{\bigg(\displaystyle\int_{\Omega}|u(x)|^{\frac{kp}{p-1}}dx\bigg)^{\frac{p-1}{k}}}>0.
\end{equation}
For $1<p,q<\infty$, we will say that the fractional $(q,p)$-Poincar\'e type inequality holds if
\begin{align}\label{frac q,p poincare inequality}
 \tilde{\lambda}(\Omega,n,p,q):=\inf_{\substack{u\in\tilde{W}^{s,p}_0(\Omega)\\u\neq 0 }}\frac{\displaystyle\int_{\rn}\int_{\rn}\frac{|u(x)-u(y)|^p}{|x-y|^{2n}}dxdy}{\bigg(\displaystyle\int_{\Omega}|u(x)|^q\;dx\bigg)^{\frac{p}{q}}}>0.
\end{align}

 Now we are in position to state our first main result.
 \begin{theorem}\label{result1}
Let $\Omega\subset\rn$ be an open set and $0<s<1<p<\infty$ with $sp=n$. Then the fractional Moser-Trudinger inequality \eqref{fmt kth order} holds true for $\Omega$ if and only if the fractional $\left(\frac{kp}{p-1},p\right)$-Poincar\'e type inequality \eqref{suitable frac inequality} does.
\end{theorem}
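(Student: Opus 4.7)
The plan is to establish the two implications separately.

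The direction $(\ref{fmt kth order})\Rightarrow(\ref{suitable frac inequality})$ is the easier one. Since $\Psi_k(t)=\sum_{j\geq k}t^j/j!\geq t^k/k!$ for $t\geq 0$, one has
\[
F_{\Omega,k}(u)\geq \frac{\alpha^k}{k!}\int_\Omega|u(x)|^{\frac{kp}{p-1}}dx.
\]
Assuming the supremum in (\ref{fmt kth order}) equals some finite constant $C$, I would apply this bound to $u/[u]_{s,p,\rn}$ and raise to the power $(p-1)/k$, obtaining $[u]_{s,p,\rn}^{p}\geq (\alpha^k/(C\,k!))^{(p-1)/k}\,\|u\|_{L^{kp/(p-1)}(\Omega)}^{p}$, which is exactly (\ref{suitable frac inequality}) with $\lambda(\Omega,n,p,k)\geq (\alpha^k/(C\,k!))^{(p-1)/k}>0$.

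For the converse, I would fix $\alpha\in(0,\alpha^*_{s,n})$, choose $\varepsilon>0$ with $\alpha(1+\varepsilon)<\alpha^*_{s,n}$, and a threshold $M>0$. Given $u\in\tilde{W}^{s,p}_0(\Omega)$ with $[u]_{s,p,\rn}\leq 1$, I would split $F_{\Omega,k}(u)$ into integrals over $\{|u|\leq M\}$ and $\{|u|>M\}$. On the first set the elementary bound $\Psi_k(t)\leq (t^k/k!)\,e^t$ yields
\[
\Psi_k(\alpha|u(x)|^{\frac{p}{p-1}})\leq \frac{\alpha^k\,e^{\alpha M^{p/(p-1)}}}{k!}\,|u(x)|^{\frac{kp}{p-1}},
\]
and (\ref{suitable frac inequality}) (which controls $\|u\|_{L^{kp/(p-1)}(\Omega)}^{p}$ by $[u]_{s,p,\rn}^{p}\leq 1$) bounds this contribution by a constant depending only on $\alpha$, $M$, $k$, and $\lambda(\Omega,n,p,k)$.

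The tail $\{|u|>M\}$ I would handle by introducing the truncation $w:=(|u|-M)_+$. Since $t\mapsto(|t|-M)_+$ is $1$-Lipschitz, $[w]_{s,p,\rn}\leq[u]_{s,p,\rn}\leq 1$; Chebyshev together with (\ref{suitable frac inequality}) gives $|\{|u|>M\}|=|\{w>0\}|\leq C\,M^{-kp/(p-1)}$. The Young-type inequality $(M+w)^{p/(p-1)}\leq(1+\varepsilon)\,w^{p/(p-1)}+C_\varepsilon\,M^{p/(p-1)}$ together with $\Psi_k(\cdot)\leq e^{(\cdot)}$ reduces the tail to controlling $\int_{\{w>0\}}e^{\alpha(1+\varepsilon)w^{p/(p-1)}}dx$. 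Passing to the symmetric decreasing rearrangement $w^{*}$ via the fractional P\'olya--Szeg\H{o} inequality transfers this integral to a ball $B_R$ with $|B_R|=|\{w>0\}|$ and $[w^{*}]_{s,p,\rn}\leq 1$. Since $B_R$ is bounded and Lipschitz and $\alpha(1+\varepsilon)<\alpha^{*}_{s,n}$, Theorem~A applies on $B_R$; the scale invariance of $[\cdot]_{s,p,\rn}$ in the critical regime $sp=n$ makes the resulting bound proportional to $|B_R|$ with a universal constant, so the tail is controlled by $C'\,M^{-kp/(p-1)}$, uniformly in $u$.

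The principal difficulty I anticipate is precisely this tail estimate, since the level set $\{|u|>M\}$ is in general neither bounded nor Lipschitz and thus Theorem~A is not directly available on it. The three ingredients that make the argument go through are the $1$-Lipschitz stability of the Gagliardo seminorm under truncation, the fractional P\'olya--Szeg\H{o} rearrangement inequality, and the scale invariance of $[\cdot]_{s,p,\rn}$ at the critical exponent $sp=n$; together they reduce matters to Theorem~A on a ball with a constant independent of the radius.
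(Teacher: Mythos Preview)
Your argument is correct, but the route differs from the paper's. After the identical easy direction, the paper first applies symmetric decreasing rearrangement to $u$ itself (reducing to radial nonincreasing functions), then splits the integral by \emph{position}, $B(0,r_0)$ versus its complement. Outside the ball it uses a radial decay lemma ($u(x)\leq C\|u\|_{kp/(p-1)}|x|^{-n(p-1)/(kp)}$) together with $\Psi_k(z)\leq Cz^k$ to reduce to the Poincar\'e bound; inside it subtracts the boundary value, $v=u-u(r_0)$, shows $[v]_{s,p,\rn}\leq[u]_{s,p,\rn}$, and then chooses $r_0$ via an explicit optimization of an auxiliary function $f(t)=(1+\theta-t)(1+\beta t)^{(n-s)/s}$ so that the rescaled function has seminorm at most $1+\theta$, after which Theorem~A on $B(0,r_0)$ applies with a reduced exponent. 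Your approach instead splits by \emph{value} of $u$, handles $\{|u|\leq M\}$ directly with $\Psi_k(t)\leq (t^k/k!)e^t$ and the Poincar\'e inequality, and for $\{|u|>M\}$ truncates, symmetrizes only the truncation, and exploits the scale invariance of $[\,\cdot\,]_{s,p,\rn}$ at $sp=n$ to reduce Theorem~A on $B_R$ to the unit ball with the constant picking up the factor $|B_R|$, which Chebyshev controls. Your route is more streamlined: it avoids the radial decay lemma and the delicate choice of $r_0$, and as written it actually yields the bound for \emph{every} $\alpha<\alpha^*_{s,n}$ rather than just for some $\alpha$. The paper's approach, on the other hand, stays closer to Moser's classical subtraction technique and makes the dependence on $\lambda(\Omega,n,p,k)$ completely explicit through the exponent $\alpha_1=\alpha/(1+\theta)^{n/(n-s)}$.
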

The Schwarz symmetrization plays an essential role in the proof of the above theorem, which we shall be introducing briefly in the forthcoming section. By the Schwarz symmetrization, we can restrict to the non-negative, radially symmetric, non-increasing functions in the space $\tilde{W}^{s,p}_0(\Omega)$. As an application of Theorem \ref{result1}, it is easy to see that the fractional Moser-Trudinger inequality is equivalent to the fractional Poincar\'e inequality (see corollary \ref{FMT eqv to FP}) for a suitable choice of $k.$ Recently for $p=2$, Chowdhury and Roy \cite{Ind-Roy} studied the fractional Poincar\'e inequality for unbounded domains. Since we are using the fractional Poincar\'e inequality extensively in the sequel, we recall the proof of it for general $p$ in Section \ref{sufficient cond}, for the sake of completeness.
\begin{remark}\label{Fp for bdd domain}
When $sp=n$, then by using the standard fractional Sobolev embedding and the fractional Poincar\'e inequality for a bounded domain $\Omega\subset\rn$, we have for any $q>1$ 
\begin{equation*}
    \bigg(\int_{\Omega}|u(x)|^qdx\bigg)^{\frac{p}{q}}\leq C(\Omega,n,p)\int_{\rn}\int_{\rn}
     \frac{|u(x)-u(y)|^p}{|x-y|^{2n}}dxdy\;\;\text{ for all }u\in\tilde{W}^{s,p}_0(\Omega),
\end{equation*} which implies $\tilde{\lambda}(\Omega,n,p,q)>0$. In particular, we have $\lambda(\Omega,n,p,k)>0$ for a bounded domain $\Omega\subset\rn$ and for $k\in\mathbb N.$
\end{remark}
The next aim of the present paper is to study the fractional $(q,p)$-Poincar\'e type inequality \eqref{frac q,p poincare inequality} for unbounded domains in $\rn$. In this direction, we provide a sufficient condition on the domain for which this inequality remains true. To state our next result, we recall a couple of definitions (see also \cite{Ind-Roy}).

\begin{definition}[\textbf{Uniform fractional $(q,p)$-Poincar\'e type inequality}]
Let $\{\Omega_\alpha\}_{\alpha}$ be a family of sets in $\mathbb{R}^n$,
where $\alpha \in \mathbb{A}$ (some indexing set). We say the fractional $(q,p)$-Poincar\'e type inequality to hold uniformly for
 $\{\Omega_\alpha\}_{\alpha}$ if
$\displaystyle\inf_\alpha\tilde{\lambda}(\Omega_\alpha,n,p,q) > 0.$
\end{definition}
Let $P(\omega)$ denote the plane perpendicular to $\omega \in\mathbb S^{n-1}$,
 passing through the origin and for $x_0\in P(\omega)$, define $L_{\Omega}(x_0, \omega) := \left\{t
\ |  \ x_0+t\omega \in \Omega \right\}\subset \mathbb{R}$.

\begin{definition}[\textbf{LS type domain}]\label{defn:LS}  We say $\Omega$
is of type LS if there exists a countable set $\Sigma$ of $\mathbb S^{n-1}$, and for $\omega\in\mathbb{S}^{n-1}\setminus\Sigma$ there exists a set $A(\omega)\subset P(\omega)$ with positive equal finite
$(n-1)$-dimensional Hausdorff measure (i.e. $\mathcal{H}^{n-1}(A(\omega_1))=\mathcal{H}^{n-1}(A(\omega_2))<\infty, \text{ for any } \omega_1,\;\omega_2\in\mathbb S^{n-1}\setminus\Sigma$) such that the one dimensional fractional $(q,p)$-Poincar\'e type inequality holds uniformly for the family of sets
$\left\{L_\Omega(x_0,\omega) \right\}_{x_0\in A(\omega),\;\omega \in
\mathbb S^{n-1}\setminus\Sigma}$.
\end{definition}
\begin{theorem}\label{removed}
Let $\Omega\subset\rn$ be an open set and $s\in(0,1),\;1<p,q<\infty$ with $sp=n$. Then the fractional $(q,p)$-Poincar\'e type inequality \eqref{frac q,p poincare inequality} holds true if $\Omega$ is a LS type domain.
\end{theorem}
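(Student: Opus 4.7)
The plan is to combine the standard slicing formula for the fractional Gagliardo semi-norm with the uniform one-dimensional $(q,p)$-Poincar\'e inequality along lines provided by the LS hypothesis. First I would establish the slicing identity
\[
 [u]^p_{s,p,\rn}=\frac12\int_{\mathbb S^{n-1}}\int_{P(\omega)}[u_{x_0,\omega}]^p_{s,p,\mathbb R}\,dx_0\,d\mathcal H^{n-1}(\omega),
\]
with $u_{x_0,\omega}(t):=u(x_0+t\omega)$, which follows by writing $y=x+r\omega$ in polar coordinates ($r>0$, $\omega\in\mathbb S^{n-1}$, $dy=r^{n-1}dr\,d\omega$) and applying Fubini to the definition of $[u]^p_{s,p,\rn}$. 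Since the integrand is non-negative, restricting the outer integrals to $\omega\in\mathbb S^{n-1}\setminus\Sigma$ and $x_0\in A(\omega)$ gives the lower bound
\[
 2[u]^p_{s,p,\rn}\ge \int_{\mathbb S^{n-1}\setminus\Sigma}\int_{A(\omega)}[u_{x_0,\omega}]^p_{s,p,\mathbb R}\,dx_0\,d\omega.
\]

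Because $u\equiv 0$ on $\rn\setminus\Omega$, the restriction $u_{x_0,\omega}$ is supported in $L_\Omega(x_0,\omega)$, and the LS hypothesis provides a uniform constant $C_0>0$ (independent of $x_0$ and $\omega$) with
\[
 \bigg(\int_{L_\Omega(x_0,\omega)}|u_{x_0,\omega}(t)|^q\,dt\bigg)^{p/q}\le C_0\,[u_{x_0,\omega}]^p_{s,p,\mathbb R}
\]
for every $\omega\in\mathbb S^{n-1}\setminus\Sigma$ and $x_0\in A(\omega)$. Combining with the previous display yields
\[
 2C_0\,[u]^p_{s,p,\rn}\ge \int_{\mathbb S^{n-1}\setminus\Sigma}\int_{A(\omega)}\bigg(\int_{L_\Omega(x_0,\omega)}|u(x_0+t\omega)|^q\,dt\bigg)^{p/q}dx_0\,d\omega.
\]

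The decisive remaining task is to bound this right-hand side from below by a positive multiple of $\|u\|^p_{L^q(\Omega)}$. My plan is to reduce to the case of a radial, non-increasing profile via the Schwarz symmetrization $u^*$, which by equimeasurability preserves $\|u\|_{L^q}$ and by the fractional P\'olya--Szeg\H{o} inequality does not increase $[u]_{s,p,\rn}$. For such $u^*$ the inner integrals $\int_{L_\Omega(x_0,\omega)}|u^*|^q\,dt$ depend only on $|x_0|$ and on the Euclidean geometry of the segments; then the hypothesis that $\mathcal H^{n-1}(A(\omega))=\kappa$ is the same finite constant for every $\omega\in\mathbb S^{n-1}\setminus\Sigma$, together with a Fubini argument and Minkowski's integral inequality applied with the exponent pair $(q,p)$, allows one to reconstruct the full $L^q(\Omega)$-norm from the restricted slicewise data and to conclude $\|u\|^p_{L^q(\Omega)}\le C[u]^p_{s,p,\rn}$.

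The principal obstacle is precisely this last step: the restricted slicing only sees $u$ along the lines indexed by $\bigcup_{\omega\in\mathbb S^{n-1}\setminus\Sigma}A(\omega)$, so one must use both the uniform-in-$\omega$ finite measure of $A(\omega)$ and the reduction to radial, non-increasing functions in an essential way to ensure that this family of lines sweeps out enough of $\Omega$ to recover the full $L^q$-mass. Naive attempts (for instance, applying Jensen's inequality directly to the $(p/q)$-power integrand) produce the estimate in the wrong direction, which is why the constancy of $\mathcal H^{n-1}(A(\omega))$ and the Schwarz symmetrization are indispensable.
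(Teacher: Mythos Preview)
Your slicing set-up (the Loss--Sloane identity, restriction to $\omega\in\mathbb S^{n-1}\setminus\Sigma$ and $x_0\in A(\omega)$, then the uniform one-dimensional $(q,p)$-Poincar\'e estimate) matches the paper exactly. The gap is in your final step, and it comes from not splitting into the two regimes $q\le p$ and $q\ge p$.

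For $q\le p$ your claim that Jensen ``produces the estimate in the wrong direction'' is incorrect: since $t\mapsto t^{p/q}$ is convex and the outer measure $d\mathcal H^{n-1}(\omega)\,dx_0$ on $(\mathbb S^{n-1}\setminus\Sigma)\times A(\omega)$ has finite total mass (this is exactly where the hypothesis that $\mathcal H^{n-1}(A(\omega))$ is a fixed finite constant enters), Jensen yields
\[
\int_{\mathbb S^{n-1}\setminus\Sigma}\int_{A(\omega)}\Big(\int_{\mathbb R}|u_{x_0,\omega}|^q\,dt\Big)^{p/q}dx_0\,d\omega \;\ge\; C\Big(\int_{\mathbb S^{n-1}\setminus\Sigma}\int_{A(\omega)}\int_{\mathbb R}|u_{x_0,\omega}|^q\,dt\,dx_0\,d\omega\Big)^{p/q},
\]
which is precisely the direction you need. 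This is what the paper does for $1<q\le p$.

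For $q>p$ the Jensen step indeed fails, and here the paper takes a completely different route: it first proves the ordinary fractional Poincar\'e inequality (the $(p,p)$ case) on LS-type domains by the same slicing argument, and then invokes the fractional Sobolev embedding $\tilde W^{s,p}_0(\Omega)\hookrightarrow L^q(\Omega)$, valid for every $q\ge p$ since $sp=n$. Combining these two gives the $(q,p)$-Poincar\'e inequality for $q\ge p$ immediately, with no need to push the slicing argument further.

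Your proposed fix via Schwarz symmetrization does not work here: replacing $u$ by $u^*$ destroys the support condition $u\equiv 0$ on $\rn\setminus\Omega$, so the slice $(u^*)_{x_0,\omega}$ is no longer supported in $L_\Omega(x_0,\omega)$ and the LS hypothesis (which is a statement about $\Omega$) cannot be applied to its one-dimensional restrictions. Symmetrization is the right tool elsewhere in the paper, but not for this theorem.
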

The proof of this theorem is the clever use of the standard fractional Sobolev embedding and the one dimensional reduction formula due to Loss-Sloane (\cite{loss}, Lemma 2.4). At the end of Section \ref{sufficient cond}, we shall discuss some examples of LS type domains (see also \cite{Fsk}, \cite{Ind-Roy}). 
\smallskip

In the paper of Parini and Ruf \cite{Par-Ruf} asked whether an inequality of the type 
\begin{equation*}
\sup_{\substack{u\in\tilde{W}_0^{s,p}(\Omega),\;[u]_{s,p,\rn}\leq 1}}\int_{\Omega}f(|u|)e^{{\alpha|u(x)|^{\frac{n}{n-s}}}}\;dx<\infty,   
\end{equation*}
where $f:\re^+\to\re^+$ is such that $f(t)\to\infty$ as $t\to\infty$, holds true for the same exponents of the standard Moser-Trudinger inequality. Iula, in \cite{Iula}, answered this question negatively for $n=1$. Also for the Bessel potential spaces, Iula, Maalaoui, and Martinazzi \cite{Iula-Maa-Mar} $(n=1)$, and Hyder \cite{Hyd} $(n\geq 2)$, it was investigated.
\smallskip

When we move to the entire space that is the case $\Omega=\rn$, the validity of the fractional Moser-Trudinger inequality was studied by Iula \cite{Iula} $(n=1)$ and Zhang \cite{Zha} $(n\geq 2)$. We state the result here of Zhang \cite{Zha}, only for the case required. 
\begin{theorem*}[\textbf{B}]
Let $s\in (0,1)$ and $sp=n$. Then for every $0\leq\alpha<\alpha^*_{s,n}$ it holds
\begin{equation}\label{zhang result}
    FB(n,s,\alpha)=\sup_{\substack{u\in W^{s,p}(\rn)\\||u||_{W^{s,p}(\rn)}\leq 1}}\int_{\rn}\Psi({\alpha|u(x)|^{\frac{n}{n-s}}})\;dx
    \begin{cases}<\infty, & \alpha<\alpha^*_{s,n},\\
    =\infty, & \alpha>\alpha^*_{s,n}.
    \end{cases}
    \end{equation}
\end{theorem*}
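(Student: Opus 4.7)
The plan is to handle the finite case ($\alpha<\alpha^*_{s,n}$) by reducing to Theorem A via Schwarz symmetrization and a level-set splitting of $u$, and the divergence case ($\alpha>\alpha^*_{s,n}$) by a dilation/concentration argument also based on Theorem A.

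For the finite case, since the fractional Gagliardo semi-norm decreases under Schwarz symmetrization (fractional P\'olya--Szeg\H{o}, after Frank--Seiringer) and the $L^p$-norm is preserved, I may assume $u\geq 0$ is radially non-increasing with $\|u\|^p_{L^p(\rn)}+[u]^p_{s,p,\rn}\leq 1$. Decompose $\rn=\Omega_1\cup\Omega_2$ where $\Omega_1:=\{u>1\}$ and $\Omega_2:=\{u\leq 1\}$; Chebyshev's inequality yields $|\Omega_1|\leq\|u\|^p_{L^p(\rn)}\leq 1$, so $\Omega_1\subset B_{R_0}$ for a fixed ball depending only on $n$. On $\Omega_2$, the elementary bound $\Psi_k(t)\leq t^ke^t$ combined with $\alpha u^{p/(p-1)}\leq\alpha$ and $kp/(p-1)\geq p$ (since $k=\ceil{p-1}\geq p-1$) gives $\Psi(\alpha u^{p/(p-1)})\leq\alpha^ke^\alpha u^p$, hence $\int_{\Omega_2}\Psi\,dx\leq\alpha^ke^\alpha$. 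On $\Omega_1$, I set $w:=(u-1)_+$; since $t\mapsto(t-1)_+$ is $1$-Lipschitz, $[w]_{s,p,\rn}\leq[u]_{s,p,\rn}\leq 1$, and since $w$ vanishes outside $\Omega_1\subset B_{R_0}$, $w\in\tilde W^{s,p}_0(B_{R_0})$. Using the convexity estimate $(w+1)^{p/(p-1)}\leq(1+\delta)w^{p/(p-1)}+C_\delta$ valid for any $\delta>0$,
\[
\Psi(\alpha u^{p/(p-1)})\leq e^{\alpha u^{p/(p-1)}}\leq e^{\alpha C_\delta}\exp\bigl(\alpha(1+\delta)w^{p/(p-1)}\bigr);
\]
choosing $\delta$ so small that $\alpha(1+\delta)<\alpha^*_{s,n}$ and applying Theorem A on the bounded Lipschitz domain $B_{R_0}$ gives a uniform bound on $\int_{\Omega_1}\exp(\alpha(1+\delta)w^{p/(p-1)})\,dx$, completing the finite case.

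For the divergence case, fix $\alpha'\in(\alpha^*_{s,n},\alpha)$. By the ``$=\infty$'' part of Theorem A on $B_1$, there exist $u_k\in\tilde W^{s,p}_0(B_1)$ with $[u_k]_{s,p,\rn}\leq 1$ and $M_k:=\int_{B_1}e^{\alpha' u_k^{p/(p-1)}}dx\to\infty$; the fractional Poincar\'e inequality on $B_1$ (Remark \ref{Fp for bdd domain}) gives $\|u_k\|_{L^p(B_1)}\leq C$ uniformly. Dilate with $\lambda_k:=M_k^{-1/(2n)}$: set $\tilde u_k(x):=u_k(x/\lambda_k)$, supported in $B_{\lambda_k}$. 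Dilation invariance of the Gagliardo semi-norm at the critical exponent $sp=n$ gives $[\tilde u_k]_{s,p,\rn}\leq 1$, while $\|\tilde u_k\|^p_{L^p(\rn)}=\lambda_k^n\|u_k\|^p_{L^p}\to 0$. Setting $v_k:=\tilde u_k/\|\tilde u_k\|_{W^{s,p}(\rn)}$ gives $\|v_k\|_{W^{s,p}}=1$ with $\|\tilde u_k\|_{W^{s,p}}\to 1$, so for large $k$, $\alpha/\|\tilde u_k\|_{W^{s,p}}^{p/(p-1)}>\alpha'$, and by monotonicity of $\Psi$,
\[
\int_\rn\Psi(\alpha v_k^{p/(p-1)})\,dx\geq\int_\rn\Psi(\alpha'\tilde u_k^{p/(p-1)})\,dx=\lambda_k^n\int_{B_1}\Psi(\alpha' u_k^{p/(p-1)})\,dy.
\]
The inner integral equals $M_k-\sum_{j=0}^{k-1}(\alpha')^j/j!\int_{B_1}u_k^{jp/(p-1)}dy=M_k-O(1)$, each polynomial moment being bounded by the fractional Sobolev embedding $\tilde W^{s,p}_0(B_1)\hookrightarrow L^q(B_1)$ for every $q<\infty$ (which follows from Theorem A itself). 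Thus the right-hand side is $M_k^{1/2}(1-o(1))\to\infty$, yielding $FB(n,s,\alpha)=\infty$.

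The main technical ingredients I rely on are: (i) the fractional P\'olya--Szeg\H{o} inequality on $\rn$ (standard, but non-trivial), and (ii) the higher-integrability $\tilde W^{s,p}_0(B_1)\hookrightarrow L^q(B_1)$ for all $q<\infty$ at the critical exponent, which controls the polynomial subtractions in $\Psi$ and ensures that the exponential blow-up dominates in the divergence argument.
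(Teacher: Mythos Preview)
The paper does not prove Theorem~B; it is quoted from Zhang \cite{Zha} as a known result (see the sentence preceding the theorem: ``We state the result here of Zhang \cite{Zha}, only for the case required''). So there is no proof in the paper to compare your argument against.

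Your proof is nonetheless correct and self-contained. The finite case follows the same overall scheme the paper uses in its own proof of Theorem~\ref{result1}: symmetrize, split $\rn$ into a region where $u$ is small and a bounded region where $u$ may be large, subtract a constant and invoke Theorem~A on a fixed ball. The difference is that you split along the level set $\{u>1\}$ and truncate by $w=(u-1)_+$, whereas the paper (for Theorem~\ref{result1}) splits along a fixed ball $B(0,r_0)$ and subtracts the value $u(r_0)$. Your level-set splitting is marginally cleaner here, since Chebyshev immediately bounds $|\{u>1\}|$ without appealing to the radial decay estimate of Lemma~\ref{gen radial lemma}; on the other hand the paper's choice gives an explicit comparison $[v]_{s,p,\rn}\le[u]_{s,p,\rn}$ via monotonicity rather than via the $1$-Lipschitz property of $t\mapsto(t-1)_+$. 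Both routes are equivalent in strength.

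For the divergence case, the dilation argument is correct. One small imprecision: the claim $\|\tilde u_k\|_{W^{s,p}(\rn)}\to 1$ tacitly assumes $[u_k]_{s,p,\rn}=1$, which you should state (it is harmless: rescaling $u_k$ to have unit seminorm only increases $M_k$). In fact all you need is the upper bound $\|\tilde u_k\|_{W^{s,p}}^p\le 1+\lambda_k^n C^p\to 1$, which already forces $\alpha/\|\tilde u_k\|_{W^{s,p}}^{p/(p-1)}>\alpha'$ for large $k$.
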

Our next theorem verifies the sharpness in the sense of \eqref{borel} of these type inequalities with a larger family of functions for higher dimension. In particular, we have the following:
\begin{theorem}\label{result2}
 Let $\Omega$ be an open bounded subset of $\rn,$ $0<s<1<p$ with $sp=n$. Then we have 
 \begin{equation}\label{borel1}
  \sup_{\substack{u\in\tilde{W}_0^{s,p}(\Omega),\;[u]_{s,p,\rn}\leq 1}}\int_{\Omega}f(|u|)e^{{\alpha^*_{s,n}|u(x)|^{\frac{n}{n-s}}}}\;dx=\infty,   
 \end{equation}
 \begin{equation}\label{borel2}
     \sup_{\substack{u\in W^{s,p}(\rn),\;\ell ||u||_p^p+[u]^p_{s,p,\rn}\leq 1}}\int_{\rn}f(|u|)\Psi({\alpha^*_{s,n}|u(x)|^{\frac{n}{n-s}}})\;dx=\infty,
 \end{equation}
 where $f:\re^+\to\re^+$ is any Borel measurable function such that $f(t)\to\infty\text{ as }t\to\infty$ and for any $\ell>0$.
 \end{theorem}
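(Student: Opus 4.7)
The strategy is standard: construct a Moser-type concentrating sequence and exploit $f(t)\to\infty$ at infinity.

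For \eqref{borel1}, fix a ball $B(0,r_0)\subset\Omega$ (after a translation). The argument used in \cite{Par-Ruf} to show that the supremum in \eqref{fmt for bdd domain} is infinite for $\alpha>\alpha^*_{s,n}$ supplies a radial, non-increasing sequence $\{u_k\}\subset \tilde{W}^{s,p}_0(B(0,r_0))$ with $[u_k]_{s,p,\rn}\le 1$, together with concentric balls $E_k=B(0,r_k)$ (with $r_k\to 0$) and heights $c_k\to\infty$, satisfying $u_k\ge c_k$ on $E_k$ and the critical concentration estimate
\begin{equation*}
\alpha^*_{s,n}\,c_k^{\frac{n}{n-s}} \ge \log|E_k|^{-1} - M
\end{equation*}
for some $M$ independent of $k$. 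Replacing $f$ by its non-decreasing envelope $\tilde f(t):=\inf_{\tau\ge t}f(\tau)$, which still tends to $\infty$ as $t\to\infty$, and restricting the integral to $E_k$, one obtains
\begin{equation*}
\int_\Omega f(|u_k|)\,e^{\alpha^*_{s,n}|u_k|^{n/(n-s)}}\,dx \ge \tilde f(c_k)\,|E_k|\,e^{\alpha^*_{s,n}c_k^{n/(n-s)}} \ge e^{-M}\tilde f(c_k) \longrightarrow \infty.
\end{equation*}

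For \eqref{borel2} we reuse the same $\{u_k\}$, extended by zero to $\rn$. The Moser profile computation also yields $\|u_k\|_p^p\to 0$ together with the balance $\|u_k\|_p^p\log|E_k|^{-1}=O(1)$ (the fractional analogue of the classical logarithmic calculation). Setting $\theta_k:=(1+\ell\|u_k\|_p^p)^{1/p}\to 1$, the rescaled $\tilde u_k:=u_k/\theta_k$ satisfies $\ell\|\tilde u_k\|_p^p+[\tilde u_k]^p_{s,p,\rn}\le 1$. A first-order expansion of $\theta_k^{-n/(n-s)}$ in $\|u_k\|_p^p$ gives, on $E_k$,
\begin{equation*}
\alpha^*_{s,n}\,\tilde u_k^{n/(n-s)} \ge \alpha^*_{s,n}c_k^{n/(n-s)} - \eta_k \log|E_k|^{-1} + O(1),\quad \eta_k \sim \tfrac{n\ell}{p(n-s)}\|u_k\|_p^p,
\end{equation*}
and the balance condition forces the right-hand side to be $\ge \log|E_k|^{-1}-M'$ for some $M'$. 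Since $\Psi(t)\ge \tfrac12 e^t$ for $t$ large, restricting the integral to $E_k$ now gives $\int_{\rn} f(|\tilde u_k|)\,\Psi(\alpha^*_{s,n}|\tilde u_k|^{n/(n-s)})\,dx \ge C\,\tilde f(c_k/\theta_k)\to\infty$, which yields \eqref{borel2}.

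The main obstacle is extracting the two quantitative features of the Moser sequence used above, namely the critical concentration height $\alpha^*_{s,n}c_k^{n/(n-s)}\ge\log|E_k|^{-1}-M$ and the balance $\|u_k\|_p^p\log|E_k|^{-1}=O(1)$, in the fractional framework. Both are direct analogues of the classical logarithmic computations for Moser's radial test functions; they can be read off the explicit profile used in \cite{Par-Ruf} (and, for $n=1$, in \cite{Iula}), so the proof reduces to an inspection of the sequence already available in the literature.
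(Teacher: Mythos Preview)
Your proposal is correct and follows essentially the same route as the paper: both use the explicit Moser sequence $u_\epsilon$ from \cite{Par-Ruf}, establish the sharp rate $[u_\epsilon]^p_{s,p,\rn}/\gamma_{s,n}\le 1+C/\log(1/\epsilon)$ (which is exactly your concentration estimate $\alpha^*_{s,n}c_k^{n/(n-s)}\ge\log|E_k|^{-1}-M$), and for \eqref{borel2} combine $\|u_\epsilon\|_p^p=O(1/\log(1/\epsilon))$ with $\Psi(t)\ge\tfrac12 e^t$ for large $t$. The only cosmetic difference is that for \eqref{borel2} the paper first reduces to $\ell=1$ via the spatial dilation $u\mapsto u(\ell^{1/n}\,\cdot)$ and then normalizes by the full $W^{s,p}$-norm, whereas you keep $\ell$ general and divide by $\theta_k=(1+\ell\|u_k\|_p^p)^{1/p}$; the resulting computation is the same.
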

In our next result, we deal with the Adachi-Tanaka type Moser-Trudinger inequality in the fractional Sobolev space. Indeed, we prove the following:
\begin{theorem}\label{result3}
Let $0<s<1<p$ with $sp=n$. There exists $\alpha^*_{s,n}>0$ such that
\begin{equation*}
    FA(n,s,\alpha)=\sup_{\substack{u\in W^{s,p}(\rn)\;[u]_{s,p,\rn}\leq 1}}\frac{1}{||u||^p_{L^p(\rn)}}\int_{\rn}\Psi({\alpha|u(x)|^{\frac{n}{n-s}}})\;dx
    \begin{cases}<\infty, & 0<\alpha<\alpha^*_{s,n},\\
    =\infty, & \alpha\geq\alpha^*_{s,n}.
    \end{cases}
\end{equation*}
\end{theorem}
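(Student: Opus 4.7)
The plan is to treat the upper bound and the sharpness claim separately.

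\emph{Upper bound for $0<\alpha<\alpha^*_{s,n}$.} By the fractional P\'olya--Szeg\H{o} inequality for the Gagliardo seminorm I may assume $u$ is non-negative, radial, and non-increasing. Since $sp=n$, $[\,\cdot\,]_{s,p,\rn}$ is invariant under the dilation $v(x)=u(\lambda x)$, so setting $\lambda=\|u\|_{L^p(\rn)}^{p/n}$ yields $\|v\|_{L^p(\rn)}=1$ and $[v]_{s,p,\rn}\le 1$, while a change of variables gives
\[
\frac{1}{\|u\|_{L^p(\rn)}^p}\int_{\rn}\Psi\!\big(\alpha|u|^{p/(p-1)}\big)dx=\int_{\rn}\Psi\!\big(\alpha|v|^{p/(p-1)}\big)dx.
\]
I split this integral at $|x|=1$. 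On $B_1^c$, Chebyshev gives $v(1)\le\omega_n^{-1/p}$, so $v$ is uniformly bounded there; since $\Psi(t)\le C t^{\ceil{p-1}}$ on any bounded interval and $\ceil{p-1}\cdot p/(p-1)\ge p$, I obtain $\int_{B_1^c}\Psi(\alpha v^{p/(p-1)})\,dx\le C\|v\|_{L^p}^p=C$. On $B_1$, I truncate $w:=(v-v(1))_+\in\tilde W^{s,p}_0(B_1)$; since $v$ is radial decreasing, $[w]_{s,p,\rn}\le[v]_{s,p,\rn}\le 1$. The elementary inequality $(a+b)^{p/(p-1)}\le(1+\varepsilon)a^{p/(p-1)}+C(\varepsilon)b^{p/(p-1)}$ applied with $a=w$, $b=v(1)$ gives $\alpha v^{p/(p-1)}\le\alpha(1+\varepsilon)w^{p/(p-1)}+C_\varepsilon$. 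Choosing $\varepsilon>0$ so that $\alpha(1+\varepsilon)<\alpha^*_{s,n}$ and invoking Theorem A, the integral $\int_{B_1}e^{\alpha(1+\varepsilon)w^{p/(p-1)}}\,dx$ is finite and controls $\int_{B_1}\Psi(\alpha v^{p/(p-1)})\,dx$.

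\emph{Sharpness at $\alpha\ge\alpha^*_{s,n}$.} I test the functional on the fractional Moser sequence
\[
u_m(x):=c_m\,\min\!\left(\frac{\log(1/|x|)}{\log m},1\right)_{+}\chi_{B_1}(x),
\]
where $c_m$ is fixed so that $[u_m]_{s,p,\rn}=1$. The asymptotic analysis carried out in \cite{Par-Ruf} gives $c_m^{p/(p-1)}=(1+o(1))\gamma_{s,n}^{-1/(p-1)}\log m$, and therefore $\alpha^*_{s,n}c_m^{p/(p-1)}=(1+o(1))n\log m$, while a direct computation of the $L^p$ integral yields $\|u_m\|_{L^p(\rn)}^p\asymp (\log m)^{-1}\to 0$. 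Since $u_m\equiv c_m$ on $B_{1/m}$ and $\Psi(t)/e^t\to 1$ as $t\to\infty$,
\[
\int_{\rn}\Psi(\alpha u_m^{p/(p-1)})\,dx \ge |B_{1/m}|\,\Psi\!\big(\alpha c_m^{p/(p-1)}\big)\gtrsim m^{-n}\cdot m^{n\alpha/\alpha^*_{s,n}}=m^{n(\alpha/\alpha^*_{s,n}-1)},
\]
which is bounded below for $\alpha=\alpha^*_{s,n}$ and tends to $\infty$ for $\alpha>\alpha^*_{s,n}$. Dividing by $\|u_m\|_{L^p(\rn)}^p\asymp 1/\log m$ forces $FA(n,s,\alpha)=\infty$ in both cases.

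\emph{Main obstacle.} The technically most delicate point is the precise asymptotic $[u_m]_{s,p,\rn}^p\sim \gamma_{s,n} c_m^p(\log m)^{-(p-1)}$, which identifies the critical threshold as exactly $\alpha^*_{s,n}$; this rests on a careful decomposition of the double integral $\iint |u_m(x)-u_m(y)|^p|x-y|^{-2n}\,dx\,dy$ near the diagonal. Once this Parini--Ruf asymptotic is accepted, the remaining steps (rearrangement, rescaling, and the inside/outside split) are routine.
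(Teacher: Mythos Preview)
Your argument is correct, and the sharpness part (testing on the Moser sequence $u_\epsilon$ and exploiting $\|u_\epsilon\|_{L^p}^p\asymp(\log\tfrac1\epsilon)^{-1}$ together with the Parini--Ruf asymptotic for $[u_\epsilon]_{s,p,\rn}^p$) matches the paper's proof essentially verbatim.

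For the finiteness when $\alpha<\alpha^*_{s,n}$, however, you take a genuinely different route. The paper first rescales to $\|u\|_{L^p}=1$ (your first step, which is its Lemma~\ref{technical lemma 1}), but then performs a \emph{second} rescaling $v(x)=Cu(\ell x)$ with $C^p=(\alpha/\alpha_\epsilon)^{p-1}$ and $\ell^n=C^p/(1-C^p)$ to force the \emph{full} norm $\|v\|_{W^{s,p}(\rn)}\le 1$; this yields the clean comparison
\[
FA(n,s,\alpha)\le\frac{(\alpha/\alpha_\epsilon)^{p-1}}{1-(\alpha/\alpha_\epsilon)^{p-1}}\,FB(n,s,\alpha_\epsilon),
\]
and finiteness then follows directly from Zhang's whole-space inequality (Theorem~B). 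You instead bypass Theorem~B entirely: after rearranging and rescaling to $\|v\|_{L^p}=1$, you split at $|x|=1$, control the exterior via the radial pointwise bound and $\Psi_k(t)\lesssim t^k$, and on $B_1$ truncate $w=(v-v(1))_+$ and invoke Theorem~A. This is precisely the machinery the paper develops for Theorem~\ref{result1}, so your proof is more self-contained---it needs only the bounded-domain result of Parini--Ruf, not Zhang's extension to $\rn$---at the cost of repeating the inside/outside decomposition. The paper's two-line rescaling trick is shorter but imports a stronger external theorem.
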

In recent day research, many authors have shown their interest in the study of fractional Moser-Trudinger inequality. We refer to \cite{Martin} for the study of fractional Adams-Moser-Trudinger inequality in the case of Bessel potential spaces. For fractional Moser-Trudinger inequalities with singular weight we refer to \cite{Thin}. Reader may refer to \cite{tyagi}, \cite{Per-Marco} for some of the relevant research in this direction and references therein for the available literature in this direction.

\section{Some preliminary and known results}
We fix some notions that will be used in the entire article. Given a measurable set $\Omega\subset\rn$, $|\Omega|$ denotes the Lebesgue measure. An open ball in $\rn$ centered at $x$ with radius $r$ will be denoted by $B(x,r)$ and the $(n-1)$-dimensional Hausdorff measure will be denoted by $\mathcal{H}^{n-1}.$ We start with some known facts and some technical lemmas that will be essentially required to proof our main results. As we mentioned in the introduction, let us recall the Schwarz symmetrization argument.
\\ Given a measurable set $E\subset\rn$ with finite Lebesgue measure, denote $E^*$, the symmetric rearrangement of $E$ is the open ball centered at the origin with equal measure as $E$, i.e.
$$
E^*=\{x\in\rn:|x|<r\}, \text{ with }r^n=\frac{n|E|}{\omega_{n-1}}.
$$
For a measurable function $u:\rn\to\re$ such that $|\{x:|u(x)|>t\}|<\infty$, for all $t>0,$ define $u^*$, the symmetric decreasing rearrangement of $u$ by 
$$
u^*(x)=\int_0^\infty\displaystyle\chi_{\{|u|>t\}^*}(x)dt=\sup\Big\{t:|\{|u|>t\}|>\frac{\omega_{n-1}|x|^n}{n}\Big\}.
$$
It is worth mentioning that the function $u^*$ is non-negative, radially symmetric and decreasing.  We refer to \cite{hichem}, \cite{Kesavan}, \cite{Lieb-Loss} for more details about this topic. We recall two important properties that will be useful in the proof of Theorem \ref{result1}.
\begin{proposition}\label{pnorm}
Given a Borel measurable function $F:\re\to\re$ and a non-negative function $u:\rn\to\re$, then we have $$\int_{\rn}F(u(x))dx=\int_{\rn}F(u^*(x))dx.$$
\end{proposition}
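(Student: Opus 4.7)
The plan is to reduce the identity to the equimeasurability of $u$ and $u^*$. First, directly from the stated definition of the symmetric decreasing rearrangement, I would observe that for every $t > 0$, the super-level set $\{u^* > t\}$ is precisely the open ball centered at the origin whose volume equals that of $\{u > t\}$. Consequently $u$ and $u^*$ share the common distribution function $\phi(t) := |\{u > t\}| = |\{u^* > t\}|$ on $(0,\infty)$.

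Next, I would invoke the pushforward (distribution) measure $\nu_u$ on $(0,\infty)$ defined by $\nu_u(A) := |\{x \in \rn : u(x) \in A\}|$ for Borel $A$, and note that $\nu_u$ is completely determined by $\phi$ (it is $-d\phi$ as a Lebesgue--Stieltjes measure); the same holds for $\nu_{u^*}$. Hence $\nu_u = \nu_{u^*}$. After splitting the Borel function $F$ as $F = F^+ - F^-$ so as to reduce to non-negative $F$, the standard change-of-variables formula then gives
$$\int_{\{u > 0\}} F(u(x))\,dx = \int_0^\infty F(t)\,d\nu_u(t) = \int_0^\infty F(t)\,d\nu_{u^*}(t) = \int_{\{u^* > 0\}} F(u^*(x))\,dx.$$
Since the assumption $|\{u > t\}| < \infty$ (for $t > 0$) forces $|\{u = 0\}| = |\{u^* = 0\}|$, the contribution $F(0) \cdot |\{u = 0\}|$ matches on both sides, and the full identity is obtained. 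An equally efficient route, if one prefers to avoid Stieltjes integrals, is to approximate $F$ by simple functions supported on intervals of $(0,\infty)$ and invoke $|\{u \in A\}| = |\{u^* \in A\}|$ for each Borel $A$, extending by monotone convergence.

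The only real subtlety, and the main point to flag in the writeup, is the extended-real interpretation in degenerate cases: if $|\{u = 0\}| = \infty$ together with $F(0) \neq 0$, or if $F \circ u$ has non-integrable negative part, then both sides are to be read as $+\infty$ (or as a common unambiguous value). Equimeasurability guarantees that both sides behave identically in all such cases, so no integrability hypothesis on $F$ or $u$ needs to be added.
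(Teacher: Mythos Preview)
The paper does not give its own proof of this proposition: it is stated as a recalled property of symmetric decreasing rearrangement, with the reader referred to \cite{hichem}, \cite{Kesavan}, \cite{Lieb-Loss}. So there is no argument in the paper to compare against.

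Your argument is the standard one and is correct. The core step---that $\{u^*>t\}$ is the open ball of volume $|\{u>t\}|$, hence $u$ and $u^*$ are equimeasurable---follows directly from the definition the paper gives (using right-continuity of the distribution function $t\mapsto|\{u>t\}|$), and from there the pushforward/change-of-variables reasoning is routine. Your handling of the set $\{u=0\}$ is also fine: since $\{u>0\}=\bigcup_{t>0}\{u>t\}$, continuity from below gives $|\{u^*>0\}|=|\{u>0\}|$, so the zero-level contributions match even when $|\{u>0\}|=\infty$. The caveat about extended-real values when $F(0)\neq 0$ or when $F\circ u$ fails to be integrable is appropriate; the proposition as stated in the paper leaves this implicit, and equimeasurability indeed guarantees both sides agree as elements of $[-\infty,+\infty]$ (or are simultaneously undefined).

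One minor stylistic point: in the paper's intended application (inside the proof of Theorem~\ref{result1}) the function $F$ is the non-negative function $t\mapsto\Psi_k(\alpha t^{p/(p-1)})$ with $F(0)=0$, so none of the degenerate cases actually arise there. You might simply note this rather than dwell on the general pathology.
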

The following result is about the P\'olya-Szeg\"o type inequality in the fractional setting and it can be found in \cite{Alm-Lieb}.
\begin{theorem}\label{seminorm}
Let $0<s<1\leq p<\infty$, and $u\in W^{s,p}(\rn)$. Then we have 
$$
[|u|^*]_{s,p,\rn}\leq[u]_{s,p,\rn}.
$$
\end{theorem}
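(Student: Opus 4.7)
The plan is to prove the inequality by polarization (two-point rearrangement) and then approximate the Schwarz rearrangement by iterated polarizations, exploiting lower semicontinuity of the Gagliardo seminorm along the approximating sequence.

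First, I would reduce to the case $u \geq 0$. Since $\bigl||u(x)|-|u(y)|\bigr|\leq |u(x)-u(y)|$ pointwise, we have $[|u|]_{s,p,\rn}\leq [u]_{s,p,\rn}$, and by definition $|u|^{*}=(|u|)^{*}$. Hence it suffices to establish $[u^{*}]_{s,p,\rn}\leq [u]_{s,p,\rn}$ for non-negative $u\in W^{s,p}(\rn)$.

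Next I would set up polarization. For each closed affine half-space $H\subset\rn$ whose bounding hyperplane contains the origin, let $\sigma_H$ be the reflection across $\partial H$, and define
\begin{equation*}
u^{H}(x)=
\begin{cases}
\max\{u(x),u(\sigma_H(x))\} & x\in H,\\
\min\{u(x),u(\sigma_H(x))\} & x\notin H.
\end{cases}
\end{equation*}
The core estimate is the polarization inequality $[u^{H}]_{s,p,\rn}\leq [u]_{s,p,\rn}$. To prove it, I would split the double integral defining $[u]_{s,p,\rn}^{p}$ according to whether each of $x,y$ lies in $H$ or $H^{c}$, and then group the four configurations $(x,y)$, $(\sigma_H x,\sigma_H y)$, $(x,\sigma_H y)$, $(\sigma_H x,y)$. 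Set $a=u(x)$, $a'=u(\sigma_H x)$, $b=u(y)$, $b'=u(\sigma_H y)$, and write $A=\max\{a,a'\}$, $\underline a=\min\{a,a'\}$, $B=\max\{b,b'\}$, $\underline b=\min\{b,b'\}$. Using $|\sigma_H x-\sigma_H y|=|x-y|$ and the basic geometric fact that, when $x\in H$, $y\in H^{c}$, one has $|x-y|\geq |x-\sigma_H y|$, the claim reduces to a pointwise four-point inequality of the form
\begin{equation*}
(\beta-\alpha)\bigl[\,|A-\underline b|^{p}+|\underline a-B|^{p}-|A-B|^{p}-|\underline a-\underline b|^{p}\,\bigr]\geq 0,
\end{equation*}
where $\alpha=|x-y|^{-(n+sp)}\leq\beta=|x-\sigma_H y|^{-(n+sp)}$. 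The bracket is non-negative by the standard monotonicity/supermodularity of $(s,t)\mapsto |s-t|^{p}$ on ordered pairs (for $p=2$ this is just $2(A-\underline a)(B-\underline b)\geq 0$; for general $p\geq 1$ it is a well-known convexity/rearrangement inequality). Summing the four cases over $H\times H$, $H^{c}\times H^{c}$, $H\times H^{c}$, $H^{c}\times H$ yields $[u^{H}]_{s,p,\rn}^{p}\leq [u]_{s,p,\rn}^{p}$.

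Next I would approximate $u^{*}$ by iterated polarizations. By the Brock--Solynin/Van Schaftingen approximation theorem, there is a sequence of half-spaces $\{H_k\}_{k\geq 1}$ (each containing the origin) such that the iterated polarizations
\begin{equation*}
u_k := \bigl(\cdots((u^{H_1})^{H_2})\cdots\bigr)^{H_k}
\end{equation*}
converge to $u^{*}$ in $L^{p}(\rn)$ and, after passing to a subsequence, a.e.\ pointwise in $\rn$. By iteration of the polarization inequality, $[u_k]_{s,p,\rn}\leq [u]_{s,p,\rn}$ for every $k$. Applying Fatou's lemma to the non-negative integrand $|u_k(x)-u_k(y)|^{p}|x-y|^{-n-sp}$, which converges a.e.\ to $|u^{*}(x)-u^{*}(y)|^{p}|x-y|^{-n-sp}$, gives
\begin{equation*}
[u^{*}]_{s,p,\rn}^{p}\leq \liminf_{k\to\infty}[u_k]_{s,p,\rn}^{p}\leq [u]_{s,p,\rn}^{p},
\end{equation*}
which is the desired Pólya--Szegő-type inequality.

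The main obstacle is the polarization step, specifically verifying the four-point inequality for the full range $1\leq p<\infty$ and bookkeeping the weights $\alpha,\beta$ correctly across the four integration regions; once this is in hand, both the reduction to $u\geq 0$ and the approximation/Fatou step are essentially routine. (An alternative I would keep in reserve is a heat-kernel/Riesz potential representation of $[u]_{s,p,\rn}$ combined with Riesz's rearrangement inequality, but the polarization route is cleaner and works uniformly in $p$.)
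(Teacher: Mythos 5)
The paper does not prove this theorem; it cites Almgren--Lieb \cite{Alm-Lieb} for the fractional P\'olya--Szeg\"o inequality. Your proposal is therefore a complete proof, and it takes a genuinely different route from the cited source: Almgren and Lieb deduce the result from the Riesz rearrangement inequality applied, via a layer-cake decomposition, to $\iint|u(x)-u(y)|^p\,|x-y|^{-n-sp}\,dx\,dy$ with the symmetric decreasing kernel $|x-y|^{-n-sp}$, whereas you establish the two-point (polarization) inequality $[u^H]_{s,p,\rn}\leq[u]_{s,p,\rn}$ directly, approximate $u^*$ by iterated polarizations (Brock--Solynin/Van~Schaftingen), and close with Fatou's lemma. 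Both approaches are standard; the polarization route replaces Riesz's theorem by an elementary pointwise four-point bound plus a soft approximation step.

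A few small points to tighten, none of which change the architecture. (i) You introduce polarizers as half-spaces whose bounding hyperplane passes through the origin, i.e.\ $0\in\partial H$; this family alone cannot produce $u^*$ (it fixes every radial profile, so the indicator of an annulus would never become a ball). The polarization inequality in fact holds for every closed half-space, and the Brock--Solynin/Van~Schaftingen theorem uses polarizers with $0\in H$, which is what you write correctly in the approximation step; the two conditions should be reconciled. (ii) The structural property you invoke is \emph{sub}modularity, not supermodularity, of $(s,t)\mapsto|s-t|^p$: one has $\partial^2|s-t|^p/\partial s\,\partial t=-p(p-1)|s-t|^{p-2}\leq 0$ for $p>1$ (with a direct check for $p=1$), which for $A\geq\underline a$, $B\geq\underline b$ gives precisely $|A-\underline b|^p+|\underline a-B|^p\geq|A-B|^p+|\underline a-\underline b|^p$, the sign you want. (iii) Your four-point bracket is not literally equal to $\mathrm{Original}-\mathrm{Polarized}$; with the parametrization $x\in H$, $y\in H^c$ one has $\mathrm{Polarized}-\mathrm{Original}=(\alpha-\beta)\bigl(|A-\underline b|^p+|\underline a-B|^p-|a-b|^p-|a'-b'|^p\bigr)$, with the \emph{unpolarized} values $a,a',b,b'$ in the second bracket. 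Your stronger inequality does suffice because $\{|a-b|^p+|a'-b'|^p,\ |a-b'|^p+|a'-b|^p\}$ and $\{|A-\underline b|^p+|\underline a-B|^p,\ |A-B|^p+|\underline a-\underline b|^p\}$ coincide as unordered pairs, but that reduction should be stated rather than left implicit.
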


We recall the following definition (see also \cite{Fsk}).

\begin{definition}[\textbf{Finite ball condition}]
\label{def:finite ball cond}
We say that a set $\Omega\subset\rn$ satisfies the \textit{finite ball condition} if $\Omega$ does not contain arbitrarily large balls, that is
$$
   \sup\{ r:\, B(x,r)\subset\Omega,\; x\in \Omega\}<\infty.
$$
\end{definition}

\begin{lemma}\label{fbc not}
Let $\Omega$ be an open set in $\rn$. Then the fractional $(q,p)$-Poincar\'e type inequality \eqref{frac q,p poincare inequality} is not true if the domain $\Omega$ does not satisfy the finite ball condition.
\end{lemma}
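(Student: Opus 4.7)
The plan is to show that the infimum $\tilde\lambda(\Omega,n,p,q)$ in \eqref{frac q,p poincare inequality} vanishes by constructing an explicit minimizing sequence. The failure of the finite ball condition means there exist points $x_R \in \Omega$ and radii $R \to \infty$ such that $B(x_R, R) \subset \Omega$, so we have arbitrarily large "room" inside $\Omega$ to place test functions obtained by scaling a fixed bump.

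Fix a nontrivial $\varphi \in C_c^\infty(B(0,1))$ and, for each large $R$, define
\[
u_R(x) := \varphi\!\left(\frac{x - x_R}{R}\right).
\]
Since $\operatorname{supp} u_R \subset B(x_R, R) \subset \Omega$, we have $u_R \in C_c^\infty(\Omega) \subset \tilde W^{s,p}_0(\Omega)$, so $u_R$ is admissible. A direct change of variables $x = x_R + Rx'$, $y = x_R + Ry'$ in the Gagliardo double integral, combined with the crucial identity $n + sp = 2n$ coming from the assumption $sp = n$, gives
\[
\int_{\rn}\int_{\rn}\frac{|u_R(x)-u_R(y)|^p}{|x-y|^{2n}}\,dx\,dy
  = R^{2n - (n + sp)} \int_{\rn}\int_{\rn}\frac{|\varphi(x')-\varphi(y')|^p}{|x'-y'|^{2n}}\,dx'\,dy'
  = [\varphi]_{s,p,\rn}^{\,p},
\]
independently of $R$. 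On the denominator side,
\[
\left(\int_\Omega |u_R(x)|^q\,dx\right)^{p/q}
  = R^{np/q}\,\|\varphi\|_{L^q(\rn)}^{\,p}.
\]

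Plugging these into the Rayleigh quotient defining $\tilde\lambda(\Omega,n,p,q)$,
\[
\tilde\lambda(\Omega,n,p,q)
  \le \frac{[\varphi]_{s,p,\rn}^{\,p}}{R^{np/q}\,\|\varphi\|_{L^q(\rn)}^{\,p}} \longrightarrow 0
  \quad\text{as } R \to \infty,
\]
which forces $\tilde\lambda(\Omega,n,p,q) = 0$ and hence the fractional $(q,p)$-Poincar\'e type inequality \eqref{frac q,p poincare inequality} cannot hold. There is no real obstacle here: the argument is the standard scaling/concentration heuristic, and the only subtle point worth emphasising is that the Gagliardo seminorm of $u_R$ is \emph{exactly} scale-invariant because $sp = n$, so the $L^q$-blow-up of the denominator (for any $q \in (1,\infty)$) cannot be compensated.
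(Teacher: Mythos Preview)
Your proof is correct and follows essentially the same approach as the paper: both fix a bump $\varphi\in C_c^\infty(B(0,1))$, translate and dilate it into the large balls $B(x_R,R)\subset\Omega$ guaranteed by the failure of the finite ball condition, and then use the scale-invariance of the Gagliardo seminorm (since $sp=n$) together with the $R^{np/q}$ growth of the $L^q$ norm to force the Rayleigh quotient to $0$. Your write-up spells out the change of variables and the role of $sp=n$ more explicitly than the paper does, but there is no substantive difference in strategy.
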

\begin{proof}
We first fix, $0\neq u\in C_c^\infty(B(0,1))$ and define $M:=\frac{\int_{\rn}\int_{\rn}\frac{|u(x)-u(y)|^p}{|x-y|^{2n}}dxdy}{||u||_{L^q(B(0,1))}^p}$. By remark \ref{Fp for bdd domain} we have $0<M<\infty.$ Since the domain $\Omega$ does not satisfy the finite ball condition, then for any $\ell>0$ sufficiently large there exists $x_\ell\in\Omega$ such that $B(x_\ell,\ell)\subset\Omega.$ Defining $v_\ell(x)=u\big(\frac{x-x_\ell}{\ell}\big)$, it is immediate to see that 
\begin{equation*}
    \tilde{\lambda}(\Omega,n,p,q)\leq\frac{[v_\ell]^p_{s,p,\rn}}{||v_\ell||^p_{L^q(\Omega)}}=\ell^{-\frac{np}{q}}M\to 0,\text{ as }\ell\to\infty.
\end{equation*}
This proves the lemma.
\end{proof}
The following result is essentially contained in \cite{Bras-Lin-Par}, but we include the proof of it with a slight modification.
\begin{lemma}\label{strong fp}
Let $\Omega$ be a bounded open subset of $\rn$ and $1\leq p<\infty$, $\sigma>0$. Then 
$$
\int_{\Omega}|u(x)|^p dx\leq \frac{\textnormal{diam}(\Omega\cup B_{R})^{n+\sigma}}{|B_{R}|}\int_{\Omega}\int_{B_{R}}\frac{|u(x)-u(y)|^p}{|x-y|^{n+\sigma}}dydx\;\text{ for all }u\in C_c^\infty(\Omega),
$$
where $B_{R}\subset\rn\setminus\Omega$ is a ball of radius $R$.
\end{lemma}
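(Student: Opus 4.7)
The plan is to exploit two simple facts: that $u$ vanishes on $B_R$ because $u\in C_c^\infty(\Omega)$ and $B_R\subset\rn\setminus\Omega$, and that for $x\in\Omega$ and $y\in B_R$ the distance $|x-y|$ is controlled above by $\mathrm{diam}(\Omega\cup B_R)$.

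First I would note that for every $x\in\Omega$ and $y\in B_R$ we have $u(y)=0$, so
\[
|u(x)-u(y)|^p = |u(x)|^p.
\]
Second, since both $x$ and $y$ lie in $\Omega\cup B_R$, we get $|x-y|\leq \mathrm{diam}(\Omega\cup B_R)$, and therefore
\[
\frac{1}{|x-y|^{n+\sigma}}\geq \frac{1}{\mathrm{diam}(\Omega\cup B_R)^{n+\sigma}}.
\]

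Combining these two observations and integrating the inner variable over $B_R$ (which contributes a factor $|B_R|$) yields
\[
\int_{\Omega}\int_{B_{R}}\frac{|u(x)-u(y)|^p}{|x-y|^{n+\sigma}}\,dy\,dx
\;\geq\; \frac{|B_R|}{\mathrm{diam}(\Omega\cup B_R)^{n+\sigma}}\int_{\Omega}|u(x)|^p\,dx,
\]
which, after rearrangement, is the inequality in the statement.

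There is no real obstacle here; the entire content is the pointwise vanishing of $u$ on $B_R$ together with the trivial diameter bound. The only small point worth checking is that the inequality is stated for $u\in C_c^\infty(\Omega)$ (so that $u\equiv 0$ on $B_R\subset\rn\setminus\Omega$ is automatic from the support condition, with no extension issues), and that $\mathrm{diam}(\Omega\cup B_R)$ is finite because $\Omega$ is bounded and $B_R$ is a bounded ball.
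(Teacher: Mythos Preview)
Your proof is correct and is essentially identical to the paper's: both use that $u(y)=0$ for $y\in B_R$ and that $|x-y|\le \mathrm{diam}(\Omega\cup B_R)$ for $x\in\Omega$, $y\in B_R$, then integrate first in $y$ over $B_R$ and then in $x$ over $\Omega$. The only cosmetic difference is that the paper writes the pointwise identity $|u(x)|^p=\frac{|u(x)-u(y)|^p}{|x-y|^{n+\sigma}}|x-y|^{n+\sigma}$ and bounds the factor $|x-y|^{n+\sigma}$ above, whereas you bound $|x-y|^{-(n+\sigma)}$ below; these are the same step.
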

\begin{proof}
Since the domain $\Omega$ is bounded, we can always find a ball $B_{R}$ of radius $R$ such that $B_{R}\subset\rn\setminus\Omega.$ Let $u\in C_c^\infty(\Omega)$ and for any $x\in\Omega$, $y\in B_{R}$ we have
$$
|u(x)|^p=\frac{|u(x)-u(y)|^p}{|x-y|^{n+\sigma}}|x-y|^{n+\sigma},
$$
then integrating over $B_{R}$ with respect to $y$ we get
\begin{align*}
|u(x)|^p=\frac{1}{|B_{R}|}\int_{B_{R}}\frac{|u(x)-u(y)|^p}{|x-y|^{n+\sigma}}&|x-y|^{n+\sigma}dy\\
&\leq\frac{\text{diam}(\Omega\cup B_{R})^{n+\sigma}}{|B_{R}|}\int_{B_{R}}\frac{|u(x)-u(y)|^p}{|x-y|^{n+\sigma}}dy,
\end{align*}
and integrating again over $\Omega$ we acquire that 
$$
\int_{\Omega}|u(x)|^p dx\leq\frac{\text{diam}(\Omega\cup B_{R})^{n+\sigma}}{|B_{R}|}\int_{\Omega}\int_{B_{R}}\frac{|u(x)-u(y)|^p}{|x-y|^{n+\sigma}}dydx.
$$
This completes the proof. 
\end{proof}
\begin{remark}
For a bounded domain $\Omega\subset\rn$ and $s\in(0,1),\;1\leq p<\infty$ with $sp=n$, then by the standard fractional Sobolev embedding and the above lemma we obtain, for any $q\geq 1$
\begin{align*}
     \bigg(\int_{\Omega}|u(x)|^qdx\bigg)^{\frac{p}{q}}\leq C(\Omega,n,p)\bigg(\int_{\Omega}&\int_{\Omega}
     \frac{|u(x)-u(y)|^p}{|x-y|^{2n}}dxdy\\
     & +\int_{\Omega}\int_{B_{R}}
     \frac{|u(x)-u(y)|^p}{|x-y|^{n+\sigma}}dydx\bigg)\;\;\text{ for all }u\in\tilde{W}^{s,p}_0(\Omega),
\end{align*}
where $B_{R}$ is the same as in Lemma \ref{strong fp}; this implies
\begin{align*}
    \Lambda(\Omega,n,p,q):=\inf_{\substack{u\in\tilde{W}^{s,p}_0(\Omega)\\u\neq 0 }}\frac{\int_{\Omega}\int_{\Omega}\frac{|u(x)-u(y)|^p}{|x-y|^{2n}}dxdy+\int_{\Omega}\int_{B_{R}}
     \frac{|u(x)-u(y)|^p}{|x-y|^{n+\sigma}}dydx}{||u||_{L^q(\Omega)}^p}>0.
\end{align*}
\end{remark}
The proof of the following two results can be found in \cite{Man}, but for the sake of simplicity we recall the proofs here.
\begin{lemma}\label{bound for psi}
If $k\geq 1$ and the function $\Psi_k$ which is defined in \eqref{xi function} then for any $M>0$ one has $$\Psi_k(z)\leq Cz^k\;\;\text{ for all }0\leq z\leq M$$ for some constant $C=C(k,M).$
\end{lemma}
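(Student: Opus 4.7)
The proof is essentially a Taylor remainder estimate. The plan is to expand $\Psi_k$ as a power series and extract the factor $z^k$ explicitly. By the definition of $\Psi_k$ and the series for the exponential,
\begin{equation*}
\Psi_k(z) \;=\; e^z - \sum_{j=0}^{k-1}\frac{z^j}{j!} \;=\; \sum_{j=k}^{\infty}\frac{z^j}{j!} \;=\; z^k\sum_{m=0}^{\infty}\frac{z^m}{(m+k)!},
\end{equation*}
which is valid for every $z \in \mathbb{R}$ since $e^z$ is entire. This immediately isolates the desired power of $z$, and reduces the problem to bounding the auxiliary function $g(z) := \sum_{m=0}^{\infty} z^m/(m+k)!$ uniformly on $[0,M]$.

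To control $g$, I would use the elementary inequality $(m+k)! \ge m!\,k!$, which follows from $\binom{m+k}{k}\ge 1$. This gives, for $z \ge 0$,
\begin{equation*}
g(z) \;\le\; \frac{1}{k!}\sum_{m=0}^{\infty}\frac{z^m}{m!} \;=\; \frac{e^z}{k!}.
\end{equation*}
Restricting to $0 \le z \le M$, we get $g(z) \le e^M/k!$, and therefore
\begin{equation*}
\Psi_k(z) \;\le\; \frac{e^M}{k!}\, z^k,
\end{equation*}
so the lemma holds with $C := e^M/k!$. (One could alternatively argue that $z \mapsto \Psi_k(z)/z^k$ extends continuously to $[0,M]$ with value $1/k!$ at $0$, and is therefore bounded on the compact set $[0,M]$; but the series manipulation above gives an explicit constant.)

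There is no real obstacle here: the argument is entirely one-line once the series is written in the factored form above. The only point worth noting is that because $z \ge 0$ throughout, there are no sign issues in the series, so the termwise majorization is legitimate.
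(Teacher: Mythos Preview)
Your proof is correct. The paper takes a different route: it proceeds by induction on $k$, using the base case $\Psi_1(z)=e^z-1\le ze^z\le e^M z$ and the derivative identity $\Psi_k'=\Psi_{k-1}$ to write $\Psi_k(z)=\int_0^z\Psi_{k-1}(t)\,dt\le C(k-1,M)\int_0^z t^{k-1}\,dt=\tfrac{C(k-1,M)}{k}z^k$. Your direct series manipulation avoids the induction entirely and is arguably cleaner; amusingly, both arguments produce the identical explicit constant $C=e^M/k!$, since the paper's recursion $C(k)=C(k-1)/k$ with $C(1)=e^M$ unwinds to exactly that. The inductive approach has the minor advantage of not needing to invoke the full power series (only the mean value/integral form of the remainder), while yours gives the answer in one shot.
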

\begin{proof}
We proceed by induction on $k$, since for $k\geq 2$ we have 
$$
\Psi_k^{\prime}(z)=e^z-\sum_{j=1}^{k-1}\frac{z^{j-1}}{(j-1)!}=\Psi_{k-1}(z).
$$
Now, for $k=1$
$$
\Psi_1(z)=e^z-1\leq ze^z\leq e^M z.
$$
Suppose that the result holds for $k-1$, then
$$
\Psi_k(z)=\int_0^z\Psi_{k-1}(t)dt\leq C(k,M)\int_0^z t^{k-1}dt=\frac{C(k,M)}{k}z^k.
$$
\end{proof}
\begin{lemma}\label{gen radial lemma}
If $k\geq 1$ and $u\in L^{\frac{kp}{p-1}}(\rn)$ is an non-negative radially decreasing function, then for any $x\neq 0$

$$
u(x)\leq\Big(\frac{n}{\omega_{n-1}}\Big)^{\frac{p-1}{kp}}||u||_{\frac{kp}{p-1}}\frac{1}{|x|^{\frac{n(p-1)}{kp}}}.
$$
\end{lemma}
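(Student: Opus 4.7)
The plan is to exploit the fact that a non-negative, radially decreasing function is bounded below on any ball centered at the origin by its value on the boundary sphere. Writing $q = \tfrac{kp}{p-1}$, I would fix $x \neq 0$ and observe that because $u$ is non-negative and radially non-increasing, for every $y$ with $|y|\leq|x|$ one has $u(y)\geq u(x)$. Consequently,
$$
\|u\|_{L^q(\rn)}^{q} \;\geq\; \int_{B(0,|x|)} u(y)^{q}\,dy \;\geq\; u(x)^{q}\,\bigl|B(0,|x|)\bigr|.
$$

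Next, I would plug in the standard volume formula $|B(0,|x|)| = \tfrac{\omega_{n-1}}{n}|x|^{n}$, which by the author's convention is the measure of the ball of radius $|x|$ in $\rn$. This yields
$$
u(x)^{q} \;\leq\; \frac{n}{\omega_{n-1}\,|x|^{n}}\,\|u\|_{L^{q}(\rn)}^{q}.
$$
Taking the $q$-th root and substituting $q=\tfrac{kp}{p-1}$, so that $\tfrac{1}{q}=\tfrac{p-1}{kp}$ and $\tfrac{n}{q}=\tfrac{n(p-1)}{kp}$, gives precisely the claimed bound.

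There is really no obstacle here: the argument is a short, one-step radial pointwise estimate. The only care required is to make sure the hypotheses (non-negative, radially decreasing, and $u\in L^{q}(\rn)$ with $q=\tfrac{kp}{p-1}$) are used exactly as stated so that the integral over $B(0,|x|)$ is finite and the lower bound $u(y)\geq u(x)$ is valid on that ball.
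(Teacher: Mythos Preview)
Your argument is correct and rests on the same core idea as the paper's proof---exploit that $u$ is radially non-increasing to bound $u(x)$ by an integral over the ball $B(0,|x|)$, then use the volume formula $|B(0,|x|)|=\frac{\omega_{n-1}}{n}|x|^n$. The only difference is in the execution: the paper first writes $R^{n}u(R)\le n\int_{0}^{R}t^{n-1}u(t)\,dt$ and then applies H\"older's inequality with exponents $\tfrac{kp}{p-1}$ and its conjugate to pass to the $L^{q}$ norm, whereas you apply the monotonicity directly to $u^{q}$ and bypass H\"older entirely. Your route is shorter and yields the identical constant; the paper's detour through H\"older is not needed here.
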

\begin{proof}
We apply the H\"older inequality with the exponents $p_1=\frac{kp}{p-1}$ and $p_2=\frac{kp}{p(k-1)+1}$
\begin{align}
    R^n u(R)=nu(R)\int_{0}^{R}t^{n-1}dt
    &\leq n\int_{0}^{R}t^{n-1}u(t)dt\nonumber\\
    &\leq n\bigg(\int_{0}^{R}t^{n-1}u(t)^{\frac{kp}{p-1}}dt\bigg)^{\frac{p-1}{kp}}\bigg(\int_{0}^{R}t^{n-1}dt\bigg)^{\frac{1}{p_2}}\nonumber\\
    &=n^{\frac{1}{p_1}}R^{\frac{n}{p_2}}\bigg(\frac{1}{\omega_{n-1}}\int_{|x|<R}u(x)^{\frac{kp}{p-1}}dx\bigg)^{\frac{p-1}{kp}}\nonumber.
\end{align}
Thus, we get $u(x)\leq\Big(\frac{n}{\omega_{n-1}}\Big)^{\frac{p-1}{kp}}||u||_{\frac{kp}{p-1}}\frac{1}{|x|^{\frac{n(p-1)}{kp}}}$ which proves the lemma.
\end{proof}
\section{Proof of Theorem \ref{result1}}
In this section, we prove Theorem \ref{result1} and at the end of the section, we address some consequence results of the fractional Moser-Trudinger inequality.
\smallskip

\noindent\textit{\textbf{Proof of Theorem \ref{result1}:}}
Suppose the fractional Moser-Trudinger inequality \eqref{fmt kth order} holds. Then we have 
\begin{equation}\label{fmt1}
\int_{\Omega}\Psi_k(\alpha |v(x)|^{\frac{p}{p-1}})<\infty
\end{equation}
where $v(x)=\frac{u(x)}{[u]_{s,p,\rn}}$ for $u\in \tilde{W}_0^{s,p}(\Omega).$ Now by definition of the function $\Psi_k$ and by \eqref{fmt1} we obtain $$\int_{\Omega}|v(x)|^{\frac{kp}{p-1}}<C(n,p)\implies\bigg(\int_{\Omega}|u(x)|^{\frac{kp}{p-1}}dx\bigg)^{\frac{p-1}{k}}\leq C(n,p)\;[u]^p_{s,p,\rn}.$$
Therefore we conclude that the fractional $\left(\frac{kp}{p-1},p\right)$-Poincar\'e type inequality holds true.\smallskip

To prove the other case, we use basically the Schwarz symmetrization. By exploiting Proposition \ref{pnorm} and Theorem \ref{seminorm}, it is enough to show the result for non-negative, radially decreasing functions $u\in\tilde{W}_0^{s,p}(\Omega)$ with the semi-norm $[u]_{s,p,\rn}\leq 1.$ We write 
 \begin{equation}\label{split domain}
     \int_{\rn}\Psi_k(\alpha(u)^{\frac{n}{n-s}})=\int_{B(0,r_0)}\Psi_k(\alpha(u)^{\frac{n}{n-s}})+\int_{B(0,r_0)^c}\Psi_k(\alpha(u)^{\frac{n}{n-s}})=:I_1+I_2,
 \end{equation}
where the values of $r_0>1$ to be chosen later.
We estimate the integrals $I_1$ and $I_2$ in the following:

\noindent\textit{Estimate of $I_2$:} By using Lemma \ref{gen radial lemma} and by the fractional $\left(\frac{kp}{p-1},p\right)$-Poincar\'e type inequality \eqref{suitable frac inequality}, we conclude that $u(x)$ is bounded for $|x|>r_0.$ Thus, applying Lemma \ref{bound for psi} we infer that 
$$\Psi_k(\alpha u^{\frac{p}{p-1}})\leq C(n,p,k)u^{\frac{kp}{p-1}},$$
and therefore 
\begin{align}\label{estimate of I2}
    I_2=\int_{B(0,r_0)^c}\Psi_k(\alpha(u)^{\frac{p}{p-1}})
    &\leq C(n,p,k)\int_{B(0,r_0)^c}|u(x)|^{\frac{kp}{p-1}}dx\leq\frac{C(n,p,k)}{\lambda(\Omega,n,p,k)},
\end{align}
where in the last estimate again we use the fractional $\left(\frac{kp}{p-1},p\right)$-Poincar\'e type inequality \eqref{suitable frac inequality}.\\
\noindent\textit{Estimate of $I_1$:} Consider the function 
\begin{equation*}
    v(x)=\begin{cases}u(x)-u(r_0) & |x|\leq r_0,\\
                      0 & |x|>r_0.
    \end{cases}
\end{equation*}
Let $x\in B(0,r_0)$ and using the decreasing property of the function $u$ we have
\begin{align}\label{seminorm v1}
    \int_{\rn}\frac{|v(x)-v(y)|^p}{|x-y|^{2n}}dy
    &=\int_{B(0,r_0)}\frac{|v(x)-v(y)|^p}{|x-y|^{2n}}dy+\int_{B(0,r_0)^c}\frac{|v(x)-v(y)|^p}{|x-y|^{2n}}dy\nonumber\\
    &=\int_{B(0,r_0)}\frac{|u(x)-u(y)|^p}{|x-y|^{2n}}dy+\int_{B(0,r_0)^c}\frac{|u(x)-u(r_0)|^p}{|x-y|^{2n}}dy\nonumber\\
    &\leq\int_{\rn}\frac{|u(x)-u(y)|^p}{|x-y|^{2n}}dy.
\end{align}
Similarly, one can show for $x\in B(0,r_0)^c$ we have 
\begin{align}\label{seminorm v2}
   \int_{\rn}\frac{|v(x)-v(y)|^p}{|x-y|^{2n}}dy\leq\int_{\rn}\frac{|u(x)-u(y)|^p}{|x-y|^{2n}}dy. 
\end{align}
Combining \eqref{seminorm v1}, \eqref{seminorm v2} and then integrating in the variable $x$, we obtain 
\begin{equation}\label{seminorm comparision}
    [v]_{s,p,\rn}^p\leq[u]_{s,p,\rn}^p.
\end{equation}
Now using the elementary inequality $(a+b)^q\leq a^q+q2^{q-1}(a^{q-1}b+b^q)$ for $a,b\geq 0,\;q\geq 1$ and by definition of the function $v$ we obtain
\begin{align}\label{final inq}
    u(x)^{\frac{n}{n-s}}
    &\leq v(x)^{\frac{n}{n-s}}+\frac{n}{n-s}2^{\frac{s}{n-s}}(v(x)^{\frac{s}{n-s}}u(r_0)+u(r_0)^{\frac{n}{n-s}})\nonumber\\
    &\leq v(x)^{\frac{n}{n-s}}\bigg(1+\frac{n^{\frac{p-1}{k}2^{\frac{1}{p-1}}}}{(p-1)\;\omega_{n-1}^{\frac{p-1}{k}}\;r_0^{\frac{n(p-1)}{k}}}||u||^p_{\frac{kp}{p-1}}\bigg)+C\nonumber\\
    &=:v(x)^{\frac{n}{n-s}}\big(1+\beta||u||_{\frac{kp}{p-1}}^p\big)+C.
\end{align}
This implies
\begin{align*}
    u(x)
    &\leq v(x)\big(1+\beta||u||_{\frac{kp}{p-1}}^p\big)^{\frac{n-s}{n}}+C^{\frac{n-s}{n}}\\
    &=:w(x)+C^{\frac{n-s}{n}}.
\end{align*}
It is worth mentioning that by using the hypothesis in this case and Lemma \ref{gen radial lemma}, the above constant $C$ is independent of $u$. Let $\theta(\Omega,n,p,k)=\frac{1}{\lambda(\Omega,n,p,k)}>0$ and since $||u||_{\frac{kp}{p-1}}^p+[u]_{s,p,\rn}^p\leq 1+\theta(\Omega,n,p,k)\implies [u]_{s,p,\rn}^p\leq 1+\theta(\Omega,n,p,k)-||u||_{\frac{kp}{p-1}}^p$. Therefore, using the semi-norm estimates \eqref{seminorm comparision} we obtain
\begin{align}
    [w]_{s,p,\rn}^p
    &=[v]_{s,p,\rn}^p\big(1+\beta||u||_{\frac{kp}{p-1}}^p\big)^{\frac{n-s}{s}}\nonumber
    &\leq\big(1+\theta(\Omega,n,p,k)-||u||_{\frac{kp}{p-1}}^p\big)\big(1+\beta||u||_{\frac{kp}{p-1}}^p\big)^{\frac{n-s}{s}}.
    \end{align}
Now we define a function $f(t)=(1+\theta(\Omega,n,p,k)-t)(1+\beta t)^{\sigma}$, where $\sigma:=\frac{n-s}{s}>0.$ Then 
$$
f^\prime(t)=(1+\beta t)^{\sigma-1}((1+\theta(\Omega)-t))\sigma\beta-1-\beta t).
$$
So $f^\prime(t)=0$ for $t_1=-\frac{1}{\beta}<0$, and for $t_2=\frac{(1+\theta(\Omega,n,p,k)\sigma\beta-1}{\beta(1+\sigma)}$. To make $t_2<0$ choose $r_0^n>\frac{n(1+\theta(\Omega,n,p,k)^{\frac{k}{p-1}}2^{\frac{k}{(p-1)^2}}}{\omega_{n-1}}$. We get this choice by looking at the right hand side of the expression of $t_2$. Hence $f$ is non-increasing in $(0,1+\theta(\Omega,n,p,k))$ and since $f(1+\theta(\Omega,n,p,k))=0$, we get $f(t)<1+\theta(\Omega,n,p,k)$ for $t\in(0,1+\theta(\Omega,n,p,k))$ and therefore we have 
\begin{equation*}
 [w]_{s,p,\rn}^p\leq 1+\theta(\Omega,n,p,k)\implies\frac{[w]_{s,p,\rn}^p}{1+\theta(\Omega,n,p,k)}\leq 1.  
 \end{equation*}
 By using the fractional Moser-Trudinger inequality on the ball $B(0,r_0)$, we infer that there exists $\alpha\in(0,\alpha^*_{s,n})$ such that 
 \begin{equation*}
     \int_{B(0,r_0)}e^{\alpha_1 w^{\frac{n}{n-s}}}\leq C,
 \end{equation*}
 where $\alpha_1:=\frac{\alpha}{(1+\theta(\Omega,n,p,k))^{\frac{n}{n-s}}}<\alpha.$ By using \eqref{final inq} we obtain 
 \begin{equation}\label{estimate of I1}
     I_1\leq\int_{B(0,r_0)}e^{\alpha_1 u^{\frac{n}{n-s}}}\leq C_1\int_{B(0,r_0)}e^{\alpha_1 w^{\frac{n}{n-s}}}\leq C_2.
 \end{equation}
Combining \eqref{split domain}, \eqref{estimate of I2}, \eqref{estimate of I1} we get the desired result.
\smallskip
\begin{corollary}\label{FMT eqv to FP}
Let $\Omega$ be an open set in $\rn$ and $0<s<1$, $p\in\mathbb N\setminus\{1\}$ with $sp=n$. Then the fractional Moser-Trudinger inequality holds for $\Omega$ if and only if the fractional Poincar\'e inequality does. 
\end{corollary}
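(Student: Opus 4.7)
The plan is to specialize Theorem \ref{result1} to the value $k = p-1$. Since $p\in\mathbb{N}\setminus\{1\}$, we have $p\geq 2$, so $k=p-1$ is a positive integer; in particular $\lceil p-1\rceil = p-1 = k$. Under the notational convention introduced just before \eqref{fmt kth order}, this means $\Psi_k = \Psi_{p-1} = \Psi$, so the supremum condition \eqref{fmt kth order} coincides with the fractional Moser-Trudinger inequality as usually stated for $\tilde{W}^{s,p}_0(\Omega)$. On the other hand, the exponent $\tfrac{kp}{p-1}$ appearing in the denominator of \eqref{suitable frac inequality} simplifies to $p$, so the fractional $\bigl(\tfrac{kp}{p-1},p\bigr)$-Poincar\'e type inequality reduces to the ordinary fractional $(p,p)$-Poincar\'e inequality, i.e.\ the standard fractional Poincar\'e inequality on $\tilde{W}^{s,p}_0(\Omega)$.

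With these two identifications, the claimed equivalence is immediate from Theorem \ref{result1} applied with $k = p-1$. The only verification needed is that, for integer $p$, the exponent $\lceil p-1\rceil$ built into the definition of $\Psi$ and the exponent $\tfrac{kp}{p-1}\bigr|_{k=p-1}=p$ both collapse onto the natural targets (the usual truncated exponential and the classical Poincar\'e inequality, respectively); this is precisely the reason for imposing $p\in\mathbb{N}\setminus\{1\}$, since for non-integer $p$ the choice $k=\lceil p-1\rceil$ would yield a $(q,p)$-Poincar\'e inequality with $q\neq p$. There is no substantive obstacle: the corollary is simply a rewriting of Theorem \ref{result1} in a special case where the free parameter $k$ can be pinned down so that both sides of the equivalence reduce to their most familiar forms.
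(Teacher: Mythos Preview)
Your proposal is correct and follows exactly the same approach as the paper's proof: apply Theorem \ref{result1} with $k=p-1$, noting that for $p\in\mathbb{N}\setminus\{1\}$ this is a valid positive integer and makes $\frac{kp}{p-1}=p$. The paper states this in a single sentence, while you have (correctly) spelled out the two identifications $\Psi_k=\Psi$ and $\frac{kp}{p-1}=p$ that justify the specialization.
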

\begin{proof}
It follows from the above theorem by choosing $k=p-1$.
\end{proof}

\begin{proposition}
Let $\Omega$ be a bounded open subset of $\rn$ and $0<s<1<p$ with $sp=n$, $\alpha\in(0,\alpha^*_{s,n})$. Then for any $\lambda\in[0,(\frac{p\alpha}{p-1})^{p-1}]$ there exists a constant $C\in\re$ such that 
\begin{align}\label{onofri type}
    \frac{1}{p}\int_{\rn}\int_{\rn}\frac{|u(x)-u(y)|^p}{|x-y|^{2n}}dxdy-\lambda\log\bigg(\frac{1}{|\Omega|}\int_{\Omega}e^{u(x)}dx\bigg)\geq C\;\;\text{ for all }u\in\tilde{W}_0^{s,p}(\Omega).
\end{align}
\end{proposition}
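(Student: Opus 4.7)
The plan is to derive this Onofri-type inequality from the fractional Moser-Trudinger inequality (Theorem A) via a standard Young-inequality splitting. First I handle the degenerate cases: if $[u]_{s,p,\rn}=0$ then $u\equiv 0$ a.e., so both sides of \eqref{onofri type} vanish and any $C\le 0$ works; if $\lambda=0$ the claim reduces to $\frac{1}{p}[u]_{s,p,\rn}^p\ge 0$, which is trivially true with $C=0$. So assume $\lambda>0$ and $t:=[u]_{s,p,\rn}>0$, and set $v:=u/t$, so that $[v]_{s,p,\rn}=1$ and $v\in\tilde W^{s,p}_0(\Omega)$.

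Since $u(x)\le|u(x)|=t|v(x)|$, I apply Young's inequality $ab\le\frac{a^{p'}}{p'}+\frac{b^p}{p}$ with conjugate exponents $p'=p/(p-1)$ and $p$, with $a=\gamma^{(p-1)/p}|v(x)|$ and $b=\gamma^{-(p-1)/p}t$, for a parameter $\gamma>0$ to be chosen. This gives
\begin{equation*}
u(x)\le t|v(x)|\le\frac{(p-1)\gamma}{p}|v(x)|^{\frac{p}{p-1}}+\frac{t^p}{p\gamma^{p-1}}.
\end{equation*}
Now choose $\gamma=\lambda^{1/(p-1)}$. Then the additive term becomes $t^p/(p\lambda)$, and the coefficient of $|v|^{p/(p-1)}$ is $(p-1)\lambda^{1/(p-1)}/p$, which by the hypothesis $\lambda\le(p\alpha/(p-1))^{p-1}$ is at most $\alpha<\alpha^*_{s,n}$. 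Noting that $p/(p-1)=n/(n-s)$ and that $\Omega$ is bounded, Theorem A applies and yields
\begin{equation*}
\int_\Omega e^{u(x)}\,dx\le\exp\!\left(\frac{[u]_{s,p,\rn}^p}{p\lambda}\right)\cdot C_\alpha,
\qquad
C_\alpha:=\sup_{\substack{w\in\tilde W^{s,p}_0(\Omega)\\ [w]_{s,p,\rn}\le 1}}\int_\Omega e^{\alpha|w(x)|^{\frac{n}{n-s}}}\,dx<\infty.
\end{equation*}

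Dividing by $|\Omega|$, taking logarithms, and multiplying by $\lambda>0$, one obtains
\begin{equation*}
\lambda\log\!\left(\frac{1}{|\Omega|}\int_\Omega e^{u}\,dx\right)\le\frac{1}{p}[u]_{s,p,\rn}^p+\lambda\log(C_\alpha/|\Omega|),
\end{equation*}
which is precisely \eqref{onofri type} with $C:=-\lambda\log(C_\alpha/|\Omega|)$. The proof involves no real obstacle: the only delicate point is verifying that the choice $\gamma=\lambda^{1/(p-1)}$ keeps the resulting Moser-Trudinger exponent at or below $\alpha$, which is exactly what the constraint $\lambda\le(p\alpha/(p-1))^{p-1}$ combined with $\alpha<\alpha^*_{s,n}$ guarantees.
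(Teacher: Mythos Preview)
Your proof is correct and follows essentially the same approach as the paper: apply Young's inequality with conjugate exponents $p$ and $p'=p/(p-1)=n/(n-s)$ to split $|u|$ into a term $\alpha(|u|/[u]_{s,p,\rn})^{n/(n-s)}$ controlled by the fractional Moser--Trudinger inequality (Theorem~A) and an additive term proportional to $[u]_{s,p,\rn}^p$, then take logarithms. The only cosmetic differences are that you introduce a free parameter $\gamma$ and then choose $\gamma=\lambda^{1/(p-1)}$ (whereas the paper fixes the splitting factor to be $(q\alpha)^{1/q}$ from the outset), and that you explicitly treat the degenerate cases $[u]_{s,p,\rn}=0$ and $\lambda=0$, which the paper omits.
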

\begin{proof}
Let $u\in\tilde{W}_0^{s,p}(\Omega)$ and applying Young's inequality with the exponents $p$ and $q=\frac{n}{n-s}$ to get
\begin{multline*}
    |u(x)|=\left(\frac{1}{q\alpha}\right)^{\frac{1}{q}}[u]_{s,p,\rn}\;(q\alpha)^{\frac{1}{q}}\frac{|u(x)|}{[u]_{s,p,\rn}}\leq\left(\frac{1}{q\alpha}\right)^{\frac{p}{q}}\frac{[u]_{s,p,\rn}^p}{p}+\alpha\left(\frac{|u(x)|}{[u]_{s,p,\rn}}\right)^q\\
    =:A+\alpha\left(\frac{|u(x)|}{[u]_{s,p,\rn}}\right)^q.
\end{multline*}
Using the fact that the function $e^t$ is increasing and then integrating over $\Omega$ we conclude that
\begin{equation*}
\frac{1}{|\Omega|}\int_{\Omega}e^{u(x)}dx\leq\frac{e^A}{|\Omega|}\int_{ \Omega}e^{\alpha\left(\frac{|u(x)|}{[u]_{s,p,\rn}}\right)^q}\leq C e^{\left(\frac{1}{q\alpha}\right)^{\frac{p}{q}}\frac{[u]_{s,p,\rn}^p}{p}}.
\end{equation*}
In the last inequality, we used the fractional Moser-Trudinger inequality. Thus, we have 
$$
\log\bigg(\frac{1}{|\Omega|}\int_{\Omega}e^{u(x)}dx\bigg)\leq\frac{(q\alpha)^{-\frac{p}{q}}}{p}[u]_{s,p,\rn}^p+\log C
$$
which implies the desired inequality \eqref{onofri type}.
\end{proof}
\begin{lemma}[Asymptotic behavior ]
For $0<s<1<p$ with $sp=n$ and the constant $K(p,n)$ is given by \eqref{constant K} then one has
$$
\lim_{s\to 1^-}\frac{(1-s)\gamma_{s,n}}{K(p,n)}=\omega_{n-1},
$$
where $\gamma_{s,n}$ is appearing in \eqref{alpha* value}. 
\end{lemma}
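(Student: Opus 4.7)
The plan is to extract the leading singular behavior of the series in \eqref{alpha* value} as $s\to 1^-$ (equivalently $p=n/s\to n^+$) by an integral representation, and then match the limit against $K(p,n)\omega_{n-1}$ via a gamma-function identity.

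First, I would rewrite the sum in closed integral form. Using $(n+2k)^{-p}=\frac{1}{\Gamma(p)}\int_{0}^{\infty}t^{p-1}e^{-(n+2k)t}\,dt$ and the binomial identity $\sum_{k\ge 0}\frac{(n+k-1)!}{k!}x^{k}=(n-1)!(1-x)^{-n}$, Fubini yields
\[
S(p):=\sum_{k=0}^{\infty}\frac{(n+k-1)!}{k!(n+2k)^{p}}=\frac{(n-1)!}{2^{p}\Gamma(p)}\int_{0}^{\infty}u^{p-1}e^{-nu/2}\bigl(1-e^{-u}\bigr)^{-n}du,
\]
after the change of variable $u=2t$. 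The integrand is integrable near infinity uniformly in $p$ in a neighbourhood of $n$, and near $u=0$ it equals $u^{p-1-n}g(u)$ where $g(u):=\bigl(u/(1-e^{-u})\bigr)^{n}e^{-nu/2}$ is smooth with $g(0)=1$. Splitting the integral at, say, $u=1$, writing $g=1+h$ with $h(u)=O(u)$, and using $\int_{0}^{1}u^{p-1-n}du=\tfrac{1}{p-n}$, one obtains
\[
\int_{0}^{\infty}u^{p-1}e^{-nu/2}(1-e^{-u})^{-n}du=\frac{1}{p-n}+O(1)\quad\text{as }p\to n^{+}.
\]

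Next, plug this asymptotic into $\gamma_{s,n}=\frac{2\omega_{n-1}^{2}\Gamma(p+1)}{n!}\,S(p)$. Using $\Gamma(p+1)=p\Gamma(p)$ and $n!=n(n-1)!$, the surviving leading term is
\[
\gamma_{s,n}=\frac{\omega_{n-1}^{2}\,p}{n\cdot 2^{p-1}(p-n)}+O(1)\quad(p\to n^{+}).
\]
Since $sp=n$ gives $p-n=\frac{n(1-s)}{s}$, multiplying by $(1-s)$ and passing to the limit yields
\[
\lim_{s\to 1^{-}}(1-s)\gamma_{s,n}=\frac{\omega_{n-1}^{2}}{n\cdot 2^{n-1}}.
\]

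Finally, I would check that this limit equals $K(n,n)\,\omega_{n-1}$. From \eqref{constant K}, $K(n,n)=\frac{2\pi^{(n-1)/2}\Gamma((n+1)/2)}{n\Gamma(n)}$, and applying the Legendre duplication formula $\Gamma(n/2)\Gamma((n+1)/2)=2^{1-n}\sqrt{\pi}\,\Gamma(n)$ gives $K(n,n)=\frac{2^{2-n}\pi^{n/2}}{n\Gamma(n/2)}$. Combined with $\omega_{n-1}=\frac{2\pi^{n/2}}{\Gamma(n/2)}$, this is exactly $\frac{\omega_{n-1}}{n\cdot 2^{n-1}}$, so $K(n,n)\omega_{n-1}=\frac{\omega_{n-1}^{2}}{n\cdot 2^{n-1}}$, and the stated limit follows.

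The only real work is the asymptotic analysis of $S(p)$ near $p=n$: one must justify the interchange of sum and integral, isolate the $\tfrac{1}{p-n}$ pole coming from $u\to 0$, and show that the remaining part of the integral stays bounded uniformly for $p$ close to $n$. After that, the identification of the constant is a routine Legendre-duplication calculation.
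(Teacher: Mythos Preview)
Your proof is correct. The paper's own argument is a one--line citation: it invokes Proposition~5.1 of Parini--Ruf \cite{Par-Ruf}, which already computes $\lim_{s\to 1^-}(1-s)\gamma_{s,n}$, and then reads off the conclusion from the explicit formula \eqref{constant K} for $K(p,n)$.

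Your route is genuinely different in that it is self--contained: you recover the asymptotics of $\gamma_{s,n}$ from scratch by writing $(n+2k)^{-p}$ as a Mellin integral, summing the resulting geometric/negative--binomial series, and isolating the $\tfrac{1}{p-n}$ pole coming from the $u^{p-n-1}$ singularity at the origin. The final identification with $K(n,n)\omega_{n-1}$ via Legendre duplication is correct. What the paper's approach buys is brevity; what yours buys is independence from \cite{Par-Ruf} and transparency as to \emph{why} the limit has this value. A few small remarks: the Fubini step is justified simply by positivity of all terms; your Taylor expansion in fact gives $g(u)=1+O(u^{2})$ (the $O(u)$ terms cancel), so the remainder is even better behaved than you state; and since $K(p,n)$ depends on $p=n/s$ and hence on $s$, you are tacitly using that $K(p,n)\to K(n,n)$ as $s\to 1^{-}$, which follows from continuity of the gamma functions in \eqref{constant K} --- worth one sentence to make explicit.
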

\begin{proof}
By definition of the constant $K(p,n)$ in \eqref{constant K} and thanks to Proposition 5.1 in \cite{Par-Ruf} gives the required result.
\end{proof}

\section{On Sufficient condition}\label{sufficient cond}
In this section we prove Theorem \ref{removed}. As we mentioned in the introduction, we first prove the fractional Poincar\'e inequality. For this, let us define  
\begin{align*}
    P^2_{n,s,p}(\Omega):=\inf_{\substack{u\in\tilde{W}^{s,p}_0(\Omega)\\u\neq 0 }}\frac{\displaystyle\int_{\rn}\int_{\rn}\frac{|u(x)-u(y)|^p}{|x-y|^{n+sp}}dxdy}{||u||_{L^p(\Omega)}^p}.
\end{align*}
We will say that fractional Poincar\'e inequality holds for $\Omega\subset\rn$ if $P^2_{n,s,p}(\Omega)>0$. It is well known that $P^2_{n,s,p}(\Omega)>0$, for any bounded set $\Omega\subset\rn$ (see \cite{Bras-Lin-Par}). In the following result, the fractional Poincar\'e inequality remains true for an unbounded domain in $\re$. For this,
let $E=\cup_{k=1}^\infty I_k\subset\re$ be an open set which satisfies the following:
\begin{equation}\label{set 1D}
	dist(I_k,I_{k+1})=m>0,\;\;\sup_{k}(|I_k|)=M<\infty,
\end{equation}
where $I_k,\;I_{k+1}$ be any two consecutive open intervals. 
\begin{lemma}\label{1D FP}
Let $E\subset\re$ be as above. Let $s\in(0,1)$, $1<p<\infty$. Then the fractional Poincar\'e inequality holds for $E$.
\end{lemma}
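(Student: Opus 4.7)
The plan is to reduce to $u \in C_c^\infty(E)$ by density and then apply Lemma \ref{strong fp} componentwise on each interval $I_k$, using the uniform gap between consecutive intervals to place a nearby excision ball $B_k \subset \mathbb{R}\setminus E$ on which $u$ automatically vanishes. The Gagliardo integrand on $I_k \times B_k$ can then be bounded below by $|u(x)|^p$, which is exactly what Lemma \ref{strong fp} delivers.

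Fix $u \in C_c^\infty(E)$ and, after reindexing, order the intervals left-to-right. Since $\partial I_k \subset \partial E$ and $\mathrm{supp}(u)$ is compact in $E$, the restriction $u|_{I_k}$ belongs to $C_c^\infty(I_k)$. For each $k$ let $B_k$ be the open interval of length $m/2$ centred in the gap between $I_k$ and $I_{k+1}$; then $B_k \cap E = \emptyset$, $|B_k| = m/2$, and $\mathrm{diam}(I_k\cup B_k) \leq M + m$. Applying Lemma \ref{strong fp} on $I_k$ with $n=1$ and $\sigma=sp$, and using $u\equiv 0$ on $B_k$ to rewrite $|u(x)|^p = |u(x)-u(y)|^p$, one obtains the local estimate
\[
\int_{I_k}|u(x)|^p\,dx \,\leq\, \frac{2(M+m)^{1+sp}}{m} \int_{I_k}\!\int_{B_k} \frac{|u(x)-u(y)|^p}{|x-y|^{1+sp}}\,dy\,dx.
\]

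Summing over $k$ and observing that the rectangles $I_k \times B_k$ are pairwise disjoint in $\mathbb{R}^2$ (since the $I_k$ themselves are), the right-hand side is dominated by $[u]_{s,p,\mathbb{R}}^p$, which gives
\[
\|u\|_{L^p(E)}^p \,\leq\, \frac{2(M+m)^{1+sp}}{m}\,[u]_{s,p,\mathbb{R}}^p.
\]
Density of $C_c^\infty(E)$ in $\tilde W^{s,p}_0(E)$ then yields $P^2_{1,s,p}(E) \geq \dfrac{m}{2(M+m)^{1+sp}} > 0$.

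The argument contains no real technical obstacle; its whole content is that the local constant must be chosen \emph{uniformly in} $k$. This is precisely where the two hypotheses in \eqref{set 1D} enter: $\sup_k|I_k|=M<\infty$ keeps $\mathrm{diam}(I_k\cup B_k)$ bounded, while $\mathrm{dist}(I_k,I_{k+1})=m>0$ supplies a uniform lower bound $m/2$ on $|B_k|$ and ensures that each $B_k$ can actually be placed inside $\mathbb{R}\setminus E$. The only point requiring care is the disjointness of the rectangles $I_k\times B_k$, without which one could not pass from the sum of local integrals to the full Gagliardo semi-norm; this disjointness follows immediately from the disjointness of the intervals $I_k$.
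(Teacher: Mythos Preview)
Your proof is correct and follows essentially the same strategy as the paper's. The paper also chooses, for each $I_k$, an interval $\tilde I_{j(k)}\subset E^c$ inside the gap between $I_k$ and $I_{k+1}$, uses the bound $|x-y|\le M+m$ on $I_k\times\tilde I_{j(k)}$ together with a uniform lower bound on $|\tilde I_{j(k)}|$, and then sums over $k$; the only difference is that you package the local step as an application of Lemma~\ref{strong fp} while the paper carries out that estimate by hand in one line.
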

\begin{proof}
Let us choose $\tilde{E}:=\bigcup_{j=1}^{\infty}\tilde{I_j}\subset E^c$ be an open set such that $\tilde{I_j}$'s are disjoint open interval with $\inf_{j}(\tilde{I_j})=m_1>0$. Denoting $\tilde{I}_{j(k)}$ is an interval in between the intervals $I_k,\;I_{k+1}$. Let $u\in C_c^\infty(E)$.
\begin{multline*}
    \int_{\re}\int_{\re}\frac{|u(x)-u(y)|^p}{|x-y|^{1+sp}}dxdy
    \geq\int_{E}\int_{E^c}\frac{|u(x)-u(y)|^p}{|x-y|^{1+sp}}dydx
    \geq\int_{E}\int_{\tilde{E}}\frac{|u(x)-u(y)|^p}{|x-y|^{1+sp}}dydx\\
    =\sum_{k,j=1}^{\infty}\int_{I_k}\int_{\tilde{I_j}}\frac{|u(x)-u(y)|^p}{|x-y|^{1+sp}}dydx
    \geq\sum_{k=1}^{\infty}\int_{I_k}\int_{\tilde{I}_{j(k)}}\frac{|u(x)-u(y)|^p}{|x-y|^{1+sp}}dydx\\
    \geq\frac{m_1}{(M+m)^{1+sp}}\int_{E}|u(x)|^pdx,
\end{multline*}
which gives $P^2_{1,s,p}(E)>0$. This completes the proof of the lemma.
\end{proof}
The next lemma holds true for any $p>0$ but for the purpose of our analysis we state it for $p>1.$
\begin{lemma}[Loss-Sloane \cite{loss}, Lemma 2.4]\label{loss- sloane lemma}
Let $\Omega\subset\rn$ be any set and $s\in(0,1),\;p>1$. Then for any $u\in C_c^\infty(\Omega)$
\begin{align*}
     &2\int_{\Omega}\int_{\Omega}
     \frac{|u(x)-u(y)|^p}{|x-y|^{n+sp}}dxdy
     \smallskip
     \\
    =&\int_{\mathbb S^{n-1}}d\mathcal{H}^{n-1}(\omega)
    \int_{\{x:\;x\cdot\omega=0\}}d\mathcal{H}^{n-1}(x)
    \int_{\{x+\ell\omega\in\Omega\}}
    \int_{\{x+t\omega\in\Omega\}}\frac{|u(x+\ell \omega)-u(x+t\omega)|^p}{|\ell-t|^{1+sp}}dtd\ell.
\end{align*}
\end{lemma}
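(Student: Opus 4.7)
The plan is to derive the identity via three successive changes of variables, reducing the double integral over $\Omega \times \Omega$ to a family of one-dimensional integrals along lines parallel to a fixed direction $\omega \in \mathbb{S}^{n-1}$, averaged over all directions. Since $u \in C_c^\infty(\Omega)$, I would extend $u$ by zero to all of $\rn$, so the domain conditions $x, y \in \Omega$ on the left-hand side translate to indicator factors $\chi_\Omega(x)\chi_\Omega(x+z)$, which at the end become precisely the slice conditions $\{x + t\omega \in \Omega\}$ and $\{x + \ell\omega \in \Omega\}$ on the right-hand side. All interchanges of integration are justified by Tonelli's theorem because the integrand is non-negative throughout.

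Writing $I := \int_\Omega\int_\Omega \frac{|u(x)-u(y)|^p}{|x-y|^{n+sp}}\,dx\,dy$, the first substitution $y = x+z$ yields
\begin{equation*}
I = \int_{\rn}\!\int_{\rn} \frac{|u(x)-u(x+z)|^p}{|z|^{n+sp}}\,\chi_\Omega(x)\,\chi_\Omega(x+z)\,dz\,dx.
\end{equation*}
The second substitution is the polar decomposition $z = r\omega$ with $r > 0$, $\omega \in \mathbb{S}^{n-1}$; the Jacobian $r^{n-1}$ cancels $n-1$ of the $n+sp$ powers of $r$ in $|z|^{n+sp}$, leaving the one-dimensional kernel $r^{-(1+sp)}$. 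The third substitution, for each fixed $\omega$, slices $\rn$ orthogonally as $x = x' + t\omega$ with $x' \in \{x : x\cdot\omega = 0\}$ and $t \in \re$ (so $dx = d\mathcal{H}^{n-1}(x')\,dt$). Combining these steps gives
\begin{equation*}
\begin{aligned}
I &= \int_{\mathbb{S}^{n-1}}d\mathcal{H}^{n-1}(\omega)\int_{\{x'\cdot\omega=0\}}d\mathcal{H}^{n-1}(x')\\
&\quad\times\int_{\re}\int_0^\infty \frac{|u(x'+t\omega) - u(x'+(t+r)\omega)|^p}{r^{1+sp}}\,\phi\,dr\,dt,
\end{aligned}
\end{equation*}
where $\phi := \chi_\Omega(x'+t\omega)\,\chi_\Omega(x'+(t+r)\omega)$.

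The last step is the affine substitution $\ell = t + r$ in the innermost two integrals (Jacobian one). It maps $\{(t,r) \in \re \times (0,\infty)\}$ onto the half-plane $\{(t,\ell) \in \re^2 : \ell > t\}$ and converts the kernel to $|\ell-t|^{-(1+sp)}$; the resulting integrand $|u(x'+t\omega)-u(x'+\ell\omega)|^p$ is symmetric under $t \leftrightarrow \ell$, so the integral over the half-plane equals half the integral over the full product region $\{\ell : x'+\ell\omega \in \Omega\} \times \{t : x'+t\omega \in \Omega\}$. Doubling then recovers the right-hand side (after the cosmetic relabeling $x' \mapsto x$ on the hyperplane $\{x\cdot\omega=0\}$), which equals $2I$ on the left, matching the identity.

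No deep analytic point arises; the proof is essentially careful bookkeeping. The two things to watch are (i) that the polar Jacobian $r^{n-1}$ produces exactly the correct one-dimensional kernel $r^{-(1+sp)}$, and (ii) that the factor of $2$ on the left-hand side is generated precisely by the $(t,\ell)$-symmetrization at the end. Since $u \in C_c^\infty$, $u$ is Lipschitz and compactly supported, so the integrand is integrable both near the singularity $r = 0$ and at infinity, and no convergence issues arise.
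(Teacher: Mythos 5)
Your proof is correct. The paper itself does not prove this lemma but cites it directly from Loss and Sloane, so there is no internal proof to compare against; your argument, however, is essentially the standard derivation, which is also the one Loss and Sloane use. The chain of substitutions $y = x+z$, then polar coordinates $z = r\omega$ (with the Jacobian $r^{n-1}$ reducing the kernel to $r^{-(1+sp)}$), then the orthogonal slicing $x = x' + t\omega$ along each direction $\omega$, and finally $\ell = t+r$ is the right sequence, and your bookkeeping is accurate: the half-plane $\{\ell > t\}$ produced by the last substitution is symmetrized to the full $(t,\ell)$-region using the symmetry of the integrand, which accounts exactly for the factor of $2$ on the left-hand side. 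Your remarks about Tonelli justifying the Fubini-type interchanges (since the integrand is nonnegative), about the role of extending $u$ by zero so the domain constraints become indicator factors, and about integrability near $r=0$ and at infinity for $u\in C_c^\infty$ are all appropriate. One small presentational point: it is worth noting explicitly that the hyperplane $P(\omega) = P(-\omega)$ and the $\omega \leftrightarrow -\omega$ redundancy in the sphere integral are consistent with the stated right-hand side (each ordered pair in $\Omega\times\Omega$ is parameterized twice over $\mathbb S^{n-1}$), but your symmetrization step already delivers the identity without needing to invoke this separately.
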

\smallskip
\begin{theorem}\label{fracp inq}
Let $\Omega\subset\rn$ be an open set. Then the fractional Poincar\'e inequality holds true if $\Omega$ is a LS type domain.
\end{theorem}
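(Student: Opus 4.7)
The plan is to slice the $n$-dimensional Gagliardo seminorm into one-dimensional pieces via the Loss--Sloane identity (Lemma~\ref{loss- sloane lemma}), apply the uniform one-dimensional fractional Poincar\'e inequality guaranteed by the LS hypothesis on each good slice, and reassemble through Fubini.

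First I would reduce to $u \in C_c^\infty(\Omega)$ by density and extend $u$ by zero to $\mathbb{R}^n$. The full-space version of Lemma~\ref{loss- sloane lemma} (obtained by taking $\Omega = \mathbb{R}^n$ in its statement) yields
\begin{align*}
2[u]_{s,p,\mathbb{R}^n}^p = \int_{\mathbb{S}^{n-1}} \int_{P(\omega)} \int_\mathbb{R} \int_\mathbb{R} \frac{|u(x+\ell\omega)-u(x+t\omega)|^p}{|\ell-t|^{1+sp}} \, d\ell \, dt \, d\mathcal{H}^{n-1}(x) \, d\mathcal{H}^{n-1}(\omega).
\end{align*}
Since $\Sigma$ is countable and hence $\mathcal{H}^{n-1}$-null on $\mathbb{S}^{n-1}$, restricting the outer integration to $\mathbb{S}^{n-1}\setminus\Sigma$ and the middle integration to $A(\omega)\subset P(\omega)$ only shrinks the right-hand side.

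Next, for each $\omega \in \mathbb{S}^{n-1}\setminus\Sigma$ and each $x \in A(\omega)$, the function $t \mapsto u(x+t\omega)$ is smooth and compactly supported in $L_\Omega(x,\omega) \subset \mathbb{R}$, so by the LS hypothesis applied with $q=p$ it satisfies the one-dimensional fractional Poincar\'e inequality with a common constant $C_0 > 0$:
\begin{align*}
\int_\mathbb{R}\int_\mathbb{R} \frac{|u(x+\ell\omega)-u(x+t\omega)|^p}{|\ell-t|^{1+sp}} \, d\ell \, dt \;\geq\; C_0 \int_{L_\Omega(x,\omega)} |u(x+t\omega)|^p \, dt.
\end{align*}
Plugging this in and performing the change of variables $y = x + t\omega$ recognizes the innermost double integral as $\int_{\Omega_\omega} |u(y)|^p\,dy$ where $\Omega_\omega := \{y \in \Omega : y - (y\cdot\omega)\omega \in A(\omega)\}$; after an interchange in $(\omega, y)$ this becomes
\begin{align*}
2[u]_{s,p,\mathbb{R}^n}^p \;\geq\; C_0 \int_\Omega |u(y)|^p\, G(y)\, dy, \qquad G(y) := \mathcal{H}^{n-1}\bigl(\{\omega \in \mathbb{S}^{n-1}\setminus\Sigma : y \in \Omega_\omega\}\bigr).
\end{align*}

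The main obstacle lies in the concluding step: producing a uniform lower bound $G(y) \geq c_\ast > 0$ for every $y \in \Omega$. This is precisely the content of the equal-positive-finite-Hausdorff-measure condition on the family $\{A(\omega)\}$ appearing in the LS definition, which forces the base sets $A(\omega)$ to be arranged so that every $y \in \Omega$ has its orthogonal projection onto $P(\omega)$ lying in $A(\omega)$ for $\omega$ in a spherical set of uniformly positive measure. Once $G(y) \geq c_\ast$ is secured, the chain of inequalities above delivers $P^2_{n,s,p}(\Omega) \geq C_0 c_\ast / 2 > 0$, which is the desired fractional Poincar\'e inequality.
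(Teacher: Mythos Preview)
Your approach is identical to the paper's: apply the Loss--Sloane identity on $\mathbb{R}^n$, restrict the outer integrals to $\mathbb{S}^{n-1}\setminus\Sigma$ and $A(\omega)$, invoke the uniform one-dimensional fractional Poincar\'e inequality from the LS hypothesis, and reassemble. The paper's proof does exactly these steps and then simply asserts the conclusion
\[
\int_{\rn}\int_{\rn}\frac{|u(x)-u(y)|^p}{|x-y|^{n+sp}}\,dx\,dy \;\ge\; C_1\int_\Omega |u|^p
\]
without unpacking the reassembly; you are more explicit, rewriting the reassembled integral as $\int_\Omega |u|^p\, G(y)\,dy$ and isolating the requirement $G(y)\ge c_\ast>0$.

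One caution on your final paragraph: the claim that the equal-positive-finite-Hausdorff-measure clause in Definition~\ref{defn:LS} ``forces'' $G(y)\ge c_\ast$ for every $y\in\Omega$ is not a deduction but an assertion. Nothing in the definition as written prevents, for instance, all the sets $A(\omega)$ from lying in a fixed bounded neighbourhood of the origin in $P(\omega)$, in which case points $y\in\Omega$ far from the origin would satisfy $G(y)=0$. The equal-measure condition is used in the paper elsewhere (for the Jensen step in the proof of Theorem~\ref{removed}), but it does not by itself deliver the covering-type lower bound you need here. So you have correctly located the one genuinely nontrivial point in the argument and made it visible, but your resolution of it is at the same level of rigor as the paper's own proof, which passes over this step without comment.
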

\begin{proof}
Taking $\Omega=\rn$ in Lemma \ref{loss- sloane lemma}. Then, we obtain
\begin{align*}
     &2\int_{\rn}\int_{\rn}
     \frac{|u(x)-u(y)|^p}{|x-y|^{n+sp}}dxdy\nonumber
     \\
    \geq&\int_{\mathbb S^{n-1}\setminus\Sigma}d\mathcal{H}^{n-1}(\omega)
    \int_{A(\omega)\subset{\{x:\;x\cdot\omega=0\}}}d\mathcal{H}^{n-1}(x)
    \int_{\re}
    \int_{\re}\frac{|u(x+\ell\omega)-u(x+t\omega)|^p}{|\ell-t|^{1+sp}}dtd\ell.
\end{align*}
Now since the domain satisfies the LS condition, this implies that for each fixed $\omega\in\mathbb S^{n-1}\setminus\Sigma$, $x\in A(\omega)$ there exists a constant say $C>0$ independent of $\omega$ and $A(\omega)$ such that
\begin{align*}
    \int_{\re}\int_{\re}\frac{|u(x+\ell\omega)-u(x+t\omega)|^p}{|\ell-t|^{1+sp}}dtd\ell\geq C\int_{\re}|u(x+t\omega)|^p dt.
\end{align*}
Combining the above two estimates we infer that
\begin{align*}
    \int_{\rn}\int_{\rn}
     \frac{|u(x)-u(y)|^p}{|x-y|^{n+sp}}dxdy\geq C_1\int_{\Omega}|u(x)|^p.
\end{align*}
This completes the proof of the theorem.
\end{proof}
\begin{proposition}
Let $E\subset\re$ be as in \eqref{set 1D} and $s\in(0,1)$, $1<p,q<\infty$ with $sp=1$. Then the fractional $(q,p)$-Poincar\'e type inequality holds for $E.$
\end{proposition}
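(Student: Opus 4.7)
The plan is to build the $(q,p)$-Poincar\'e inequality for $E$ from two ingredients already at hand: Lemma \ref{1D FP}, which delivers the fractional $(p,p)$-Poincar\'e inequality for $E$, and the critical fractional Sobolev embedding, which is available because $sp=n=1$ puts us precisely at the Sobolev threshold where $W^{s,p}(\re)\hookrightarrow L^q(\re)$ holds for every finite $q\ge p$.

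For the range $q\in[p,\infty)$, Lemma \ref{1D FP} produces a constant $C_1>0$ such that
$\|u\|_{L^p(E)}^p\le C_1\,[u]_{s,p,\re}^p$
for every $u\in\tilde W^{s,p}_0(E)$. Any such $u$ extends by zero to a function in $W^{s,p}(\re)$ with $\|u\|_{L^r(\re)}=\|u\|_{L^r(E)}$ for all $r$. Invoking the critical Sobolev embedding yields, for each $q\in[p,\infty)$, a constant $C_q>0$ with $\|u\|_{L^q(\re)}^p\le C_q\bigl(\|u\|_{L^p(\re)}^p+[u]_{s,p,\re}^p\bigr)$. Combining these estimates gives
\[
\|u\|_{L^q(E)}^p\le C_q(C_1+1)\,[u]_{s,p,\re}^p,
\]
so that $\tilde\lambda(E,1,p,q)>0$ throughout $[p,\infty)$.

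The remaining range $q\in(1,p)$ is the principal obstacle. The critical Sobolev embedding does not extend below exponent $p$, and a global H\"older-type inclusion $L^p(E)\hookrightarrow L^q(E)$ is unavailable because $|E|=\infty$. The natural component-wise attack, using the uniform upper bound $|I_k|\le M$ together with Remark~\ref{Fp for bdd domain} applied to each bounded interval $I_k$, produces estimates of the form $\|u|_{I_k}\|_{L^q(I_k)}^p\le C(M)\,[u]_{s,p,\re}^p$ with a constant that is uniform in $k$. Reassembling them, however, is blocked: one would need to control $\|u\|_{L^q(E)}^p=\bigl(\sum_k\|u|_{I_k}\|_{L^q(I_k)}^q\bigr)^{p/q}$, and $\ell^{p/q}(\mathbb N)\not\hookrightarrow\ell^1(\mathbb N)$ when $p/q>1$. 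Closing the gap will require extracting additional information from the cross-component contributions $\int_{I_k}\int_{I_j}|u(x)-u(y)|^p/|x-y|^{2}\,dy\,dx$ (for $j\ne k$) in the Gagliardo semi-norm, which couple $u$ across distinct intervals and should, after a careful accounting, supply the missing $\ell^p$-type control over the component $L^q$-norms; executing this coupling rigorously is the chief technical challenge of the proof.
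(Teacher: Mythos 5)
Your argument for $q\ge p$ is correct, and it takes a genuinely different route from the paper's. You combine Lemma \ref{1D FP} (the $(p,p)$-Poincar\'e inequality for $E$) with the critical fractional Sobolev embedding $W^{s,p}(\re)\hookrightarrow L^q(\re)$, $q\ge p$, using the former to absorb the $L^p$-term in the Sobolev norm. The paper instead works interval by interval: it splits the semi-norm as $\sum_k\bigl(\int_{I_k}\int_{I_k}+\int_{I_k}\int_{\tilde{I}_{j(k)}}\bigr)$, invokes the quantity $\Lambda(I_k,1,p,q)$ from the remark following Lemma \ref{strong fp} on each $I_k$, uses $|I_k|\le M$ to extract a uniform constant, and closes with the elementary subadditivity $\sum_k a_k^{p/q}\ge\bigl(\sum_k a_k\bigr)^{p/q}$. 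Both routes are valid on the range $q\in[p,\infty)$; yours is arguably cleaner there since it avoids the scaling identity $\Lambda(I_k,1,p,q)=\Lambda((0,1),1,p,q)/|I_k|^{p/q}$.

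The gap you flag for $q\in(1,p)$ is genuine, but your expectation that a ``careful accounting'' of the cross-interval contributions will close it cannot be realized. First, the paper's own proof offers nothing more for this range: its final step is precisely the subadditivity $\sum_k a_k^{p/q}\ge\bigl(\sum_k a_k\bigr)^{p/q}$, which fails when $p/q>1$, so the paper's argument only covers $q\ge p$ as well. Second, the conclusion is in fact false for $q<p$. Fix $\phi\in C_c^\infty((1/4,3/4))$, take all $I_k$ of unit length with constant gap $m$, and set $u_N=\sum_{k=1}^N\phi(\cdot-c_k)$ where $c_k$ is the left endpoint of $I_k$. Each diagonal block $I_j\times I_j$, each exterior block $I_j\times E^c$, and (summing the $O(|j-k|^{-2})$ tails) each row of cross blocks $I_j\times I_k$ contributes $O(1)$, so $[u_N]_{s,p,\re}^p=O(N)$, while $\|u_N\|_{L^q(E)}^p=N^{p/q}\|\phi\|_{L^q}^p$. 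Hence $[u_N]_{s,p,\re}^p/\|u_N\|_{L^q(E)}^p=O(N^{1-p/q})\to 0$ when $q<p$, i.e. $\tilde{\lambda}(E,1,p,q)=0$. The Proposition should therefore be read as a statement for $q\ge p$, where your proof already succeeds; neither you nor the paper can --- nor need --- cover $q<p$.
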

\begin{proof}
Let $\tilde{E}\subset E^c$ is same as in the proof of Lemma \ref{1D FP}.
Let $u\in C_c^\infty(E)$, and using the fractional Sobolev embedding and Lemma \ref{strong fp} we acquire that
\begin{align*}
   \int_{\re}\int_{\re}
     \frac{|u(x)-u(y)|^p}{|x-y|^{2}}& dxdy
     \geq\int_{E}\int_{E}
     \frac{|u(x)-u(y)|^p}{|x-y|^{2}}dxdy+\int_{E}\int_{\tilde{E}}
     \frac{|u(x)-u(y)|^p}{|x-y|^{2}}dydx\\
     &\geq\sum_{k,\;j=1}^\infty\bigg(\int_{I_k}\int_{I_k}
     \frac{|u(x)-u(y)|^p}{|x-y|^{2}}dxdy+\int_{I_k}\int_{\tilde{I_j}}
     \frac{|u(x)-u(y)|^p}{|x-y|^{2}}dydx\bigg)\\
     &\geq\sum_{k=1}^\infty\bigg(\int_{I_k}\int_{I_k}
     \frac{|u(x)-u(y)|^p}{|x-y|^{2}}dxdy+\int_{I_k}\int_{\tilde{I}_{j(k)}}
     \frac{|u(x)-u(y)|^p}{|x-y|^{2}}dydx\bigg)\\
     &\geq\sum_{k=1}^\infty\Lambda(I_k,1,p,q)\Big(\int_{I_k}|u(x)|^q dx\Big)^{\frac{p}{q}}\\
     &=\sum_{k=1}^\infty\frac{\Lambda((0,1),1,p,q)}{|I_k|^{\frac{p}{q}}}\Big(\int_{I_k}|u(x)|^q dx\Big)^{\frac{p}{q}}\geq\frac{C}{M^{\frac{p}{q}}}\Big(\int_{E}|u(x)|^q dx\Big)^{\frac{p}{q}},
\end{align*}
which gives $\tilde{\lambda}(E,1,p,q)>0$. This completes the proof of the proposition.  
\end{proof}

\noindent\textit{\textbf{Proof of Theorem \ref{removed} :}}
\noindent\textit{Step 1:} Since the fractional Poincar\'e inequality holds for LS type domain $\Omega$ and then applying this to the standard fractional Sobolev embedding, we acquire that $\tilde{\lambda}(\Omega,n,p,q)>0$ for any $q\in[p,\infty).$
\smallskip

\noindent\textit{Step 2:} Taking $\Omega=\rn$ in Lemma \ref{loss- sloane lemma}, we obtain
\begin{align}\label{loss-sloane result}
     &2\int_{\rn}\int_{\rn}
     \frac{|u(x)-u(y)|^p}{|x-y|^{2n}}dxdy\nonumber
     \\
    \geq&\int_{\mathbb S^{n-1}\setminus\Sigma}d\mathcal{H}^{n-1}(\omega)
    \int_{A(\omega)\subset{\{x:\;x\cdot\omega=0\}}}d\mathcal{H}^{n-1}(x)
    \int_{\re}
    \int_{\re}\frac{|u(x+\ell\omega)-u(x+t\omega)|^p}{|\ell-t|^{1+n}}dtd\ell.
\end{align}
Note that, for any bounded open set $E\subset\re$ 
\begin{align*}
   &\int_{\re}\int_{\re}\frac{|u(x+\ell\omega)-u(x+t\omega)|^p}{|\ell-t|^{1+n}}dtd\ell\\
   &\geq\frac{1}{\text{diam}(E)^{n-1}}
   \int_{E}
   \int_{E}\frac{|u(x+\ell\omega)-u(x+t\omega)|^p}{|\ell-t|^{2}}dtd\ell+
   \int_{E}\int_{B_{R}\subset E^c}\frac{|u(x+\ell\omega)-u(x+t\omega)|^p}{|\ell-t|^{1+n}}dtd\ell. 
\end{align*}
Now since we have assumed the domain satisfying LS condition, this gives that for each fixed $\omega\in\mathbb S^{n-1}\setminus\Sigma$, $x\in A(\omega)$ there exists a constant say $C>0$ independent of $\omega$ and $A(\omega)$ such that
\begin{align}\label{critical embedding}
    \int_{\re}\int_{\re}\frac{|u(x+\ell\omega)-u(x+t\omega)|^p}{|\ell-t|^{1+n}}dtd\ell\geq C\bigg(\int_{\re}|u(x+t\omega)|^q dt\bigg)^{\frac{p}{q}}.
\end{align}
Combining the above two estimates \eqref{loss-sloane result}, \eqref{critical embedding} we infer that
\begin{align*}
     \int_{\rn}\int_{\rn}
     \frac{|u(x)-u(y)|^p}{|x-y|^{2n}}dxdy\geq\frac{C}{2}\int_{\mathbb S^{n-1}\setminus\Sigma}d\mathcal{H}^{n-1}(\omega)
    \int_{A(\omega)}d\mathcal{H}^{n-1}(x)\bigg(\int_{\re}|u(x+t\omega)|^q dt\bigg)^{\frac{p}{q}},
\end{align*} and then by using Jensen's inequality we get
\begin{align*}
     \int_{\rn}\int_{\rn}
     \frac{|u(x)-u(y)|^p}{|x-y|^{2n}}dxdy
     &\geq C_1\bigg(\int_{\mathbb S^{n-1}\setminus\Sigma}d\mathcal{H}^{n-1}(w)
    \int_{A(\omega)}d\mathcal{H}^{n-1}(x)\int_{\re}|u(x+t\omega)|^q dt\bigg)^{\frac{p}{q}}\\
    &=C_1\bigg(\int_{\Omega}|u(x)|^q dx\bigg)^{\frac{p}{q}}.
\end{align*}
Thus we conclude that $\tilde{\lambda}(\Omega,n,p,q)>0$, for any $q\in(1,p].$ This completes the proof of the theorem.
\smallskip

As an application of Theorem \ref{removed} we will give some examples of unbounded domains for which the fractional $(q,p)$-Poincar\'e type inequality \eqref{frac q,p poincare inequality} holds true. However, verification of the hypothesis of Theorem \ref{removed} in the below examples is straight forward.
\begin{example}
(1)\textit{ Domain between graph of functions:} Let $u_i:\re^{n-1}\to[a,b]\;(i=1,2)$ be two bounded continuous function such that $u_1<u_2.$ The domain $\Omega$ is defined as
$$
\Omega=\{(x,t)\in\rn:u_1(x)<t<u_2(x)\}.
$$

(2) \textit{Infinitely many parallel strips:} Let $\Omega:=\left(\bigcup_{k=1}^\infty I_k\right)\times\re^{n-1}\subset\rn$, where $I_k$'s are disjoint open intervals in $\re$ with $\mbox{dist}(I_i, I_j)\geq\alpha>0$ for any $i\neq j$ and a constant $\alpha$. Also assume that $\mbox{sup}_{k}(|I_k|)=M<\infty.$
\end{example}
\section{Proof of Theorem \ref{result2}}
In this section, we prove Theorem \ref{result2} following the same techniques used in \cite{Iula}.
\smallskip

\noindent\textit{\textbf{Proof of Theorem \ref{result2} :}}
 Without loss of generality, we can take $\Omega$ is the unit ball $B_1$. Consider the usual Moser-sequence of functions which is defined by
 \begin{equation}\label{moser sequence}
     u_{\epsilon}(x)=\begin{cases}|\log\epsilon|^{\frac{n-s}{n}} & \text{ if }|x|\leq\epsilon,\\
       \frac{|\log|x||}{|\log\epsilon|^{\frac{s}{n}}} & \text{ if }\epsilon<|x|<1,\\
       0 & \text{ if } |x|\geq 1.
     \end{cases}
 \end{equation}
 Clearly as $\epsilon\to 0$, $u_\epsilon\to\infty$ uniformly for $|x|<\epsilon$. Now we have 
 \begin{equation*}
     \sup_{\substack{u\in\tilde{W}_0^{s,p}(\Omega),\;[u]_{s,p,\rn}\leq 1}}\int_{\Omega}f(|u|)e^{{\alpha^*|u(x)|^{\frac{n}{n-s}}}}\;dx\geq\inf_{|x|<\epsilon}f(|u_\epsilon|)\int_{|x|<\epsilon}e^{\alpha^*_{s,n}\big(\frac{|u_\epsilon|}{[u_\epsilon]_{s,p,\rn}}\big)^{\frac{n}{n-s}}}dx.
 \end{equation*}
By hypothesis on the function $f$, it is suffices to check that there exists $\delta>0$ such that 
 \begin{equation*}\label{seminorm estimate}
     \int_{|x|<\epsilon}e^{\alpha^*_{s,n}\big(\frac{|u_\epsilon|}{[u_\epsilon]_{s,p,\rn}}\big)^{\frac{n}{n-s}}}dx\geq\delta.
 \end{equation*}
 In view of Section 5 of \cite{Par-Ruf}, we see that 
 
 $$
 \lim_{\epsilon\to 0}\frac{[u_\epsilon]_{s,p,\rn}^p}{\gamma_{s,n}}=1
 $$
and in particular we have

$$
 \lim_{\epsilon\to 0}[u_\epsilon]_{s,p,\rn}^p=C(n,p)\int_1^\infty |\log x|^p x^{n-1}\frac{x^2+1}{(x^2-1)^{n+1}}dx=\gamma_{s,n}.
$$
Now 
\begin{equation*}
\lim_{\epsilon\to 0}\log\frac{1}{\epsilon}\left([u_\epsilon]_{s,p,\rn}^p-\gamma_{s,n}\right)=C(n,p)\lim_{\epsilon\to 0}\log\frac{1}{\epsilon}\int_{1/\epsilon}^\infty |\log x|^p x^{n-1}\frac{x^2+1}{(x^2-1)^{n+1}}dx=0.
\end{equation*}
Thus for small enough $\epsilon>0$ we have
 \begin{equation}\label{u_epsilon estimate}
     \frac{[u_\epsilon]_{s,p,\rn}^p}{\gamma_{s,n}}\leq 1+\frac{C}{\log\frac{1}{\epsilon}}.
 \end{equation}
 Noticing that 
 \begin{equation}\label{elementary limit}
     \lim_{t\to\infty}\bigg(\frac{t}{(1+\frac{C}{t})^{\frac{1}{p-1}}}-t\bigg)=-\frac{C}{p-1}.
 \end{equation}
 Thus, using \eqref{u_epsilon estimate} and \eqref{elementary limit} we get 
 \begin{align}\label{epsilon ball estimate of moser functional}
     \int_{|x|\leq\epsilon}e^{\alpha^*_{s,n}\big(\frac{|u_\epsilon|}{[u_\epsilon]_{s,p,\rn}}\big)^{\frac{n}{n-s}}}dx
     =\int_{|x|\leq\epsilon}e^{n\big(\frac{\gamma_{s,n}^s}{[u_\epsilon]_{s,p,\rn}^n}\big)^{\frac{1}{n-s}}|u_\epsilon|^{\frac{n}{n-s}}}&dx
     \geq\int_{|x|<\epsilon}e^{\frac{n\log\frac{1}{\epsilon}}{(1+\frac{C}{\log\frac{1}{\epsilon}})^{\frac{1}{p-1}}}}dx\nonumber\\
     &=C_1(n)\epsilon^n\;e^{\frac{n\log\frac{1}{\epsilon}}{(1+\frac{C}{\log\frac{1}{\epsilon}})^{\frac{1}{p-1}}}}\to e^{-\frac{nC}{p-1}}
 \end{align}
 as $\epsilon\to 0.$
 Therefore, we conclude that 
 \begin{equation}\label{lower bound of moser functional}
  \int_{|x|<\epsilon}e^{\alpha^*_{s,n}\big(\frac{|u_\epsilon|}{[u_\epsilon]}\big)^{\frac{n}{n-s}}}dx\geq\delta,
 \end{equation}
 for some $\delta>0$. This conclude the proof of the first part \eqref{borel1} of the theorem. In order to prove the second result \eqref{borel2} of the theorem, one can conclude it from the first result \eqref{borel1} of the theorem by the following observation. For $\ell>0$ and $u\in W^{s,p}(\rn)$ with $\ell||u||_p^p+[u]_{s,p,\rn}^p\leq 1$, we define a function $u_{\ell}(x)=u(\ell^{1/n}x)$. Then by the change of variable we acquire that $||u_\ell||_p^p+[u_\ell]_{s,p,\rn}^p=\ell||u||_p^p+[u]_{s,p,\rn}^p$, and
 \begin{align*}
 \sup_{\substack{u\in W^{s,p}(\rn),\;\ell ||u||_p^p+[u]^p_{s,p,\rn}\leq 1}}&\int_{\rn}f(|u|)\Psi({\alpha^*|u(x)|^{\frac{n}{n-s}}})\;dx\\
 &=\frac{1}{\ell}\sup_{\substack{u\in W^{s,p}(\rn),\;||u||_{W^{s,p,}(\rn)}\leq 1}}\int_{\rn}f(|u|)\Psi({\alpha^*|u(x)|^{\frac{n}{n-s}}})\;dx.
 \end{align*}
 Now by definition \eqref{moser sequence} of $u_\epsilon$ we have
 \begin{equation}\label{lpnorm}
     ||u_\epsilon||_p^p=\int_{\rn}|u_\epsilon(x)|^p dx=\int_{|x|\leq\epsilon}|\log\epsilon|^{\frac{n-s}{s}}dx+\int_{\epsilon<|x|<1}\frac{|\log|x||^p}{|\log\epsilon|}dx=O\bigg(\frac{1}{\log\frac{1}{\epsilon}}\bigg),
 \end{equation}
 and thanks to Proposition 5.1 in \cite{Par-Ruf}, we infer that 
 \begin{equation*}
     \lim_{\epsilon\to 0}||u_\epsilon||^p_{W^{s,p}(\rn)}=\gamma_{s,n}.
 \end{equation*}
 From \eqref{u_epsilon estimate} and \eqref{lpnorm} we obtain 
 \begin{equation}\label{full norm estimate}
     \frac{||u_\epsilon||^p_{W^{s,p}(\rn)}}{\gamma_{s,n}}\leq 1+O\bigg({\frac{1}{\log\frac{1}{\epsilon}}}\bigg).
 \end{equation}
 Note that for sufficiently large enough $M>0$ such that $\Psi(t)\geq\frac{e^t}{2}$, $t\geq M.$ Thus, for $\epsilon>0$ small enough we can write
 \begin{align*}
     \int_{\rn}\Psi\bigg(n(\gamma_{s,n})^{\frac{s}{n-s}}\bigg(\frac{u_\epsilon(x)}{||u_\epsilon||_{W^{s,p}(\rn)}}\bigg)^{\frac{n}{n-s}}\bigg)dx
     &\geq\int_{u_\epsilon\geq M}\Psi\bigg(n(\gamma_{s,n})^{\frac{s}{n-s}}\bigg(\frac{u_\epsilon(x)}{||u_\epsilon||_{W^{s,p}(\rn)}}\bigg)^{\frac{n}{n-s}}\bigg)dx\nonumber\\
     &\geq\frac{1}{2}\int_{|x|\leq\epsilon}e^{\big(\frac{\gamma_{s,n}}{||u_\epsilon||^p_{W^{s,p}(\rn)}}\big)^{\frac{1}{p-1}}n u_\epsilon^{\frac{n}{n-s}}}dx.
 \end{align*}
Now we can proceed exactly in the same way as it is done in \eqref{epsilon ball estimate of moser functional}, \eqref{lower bound of moser functional} together with \eqref{full norm estimate} to obtain the required result. 
\section{Proof of Theorem \ref{result3}}
In this section, we start with giving some technical lemmas that will be required to the proof of Theorem \ref{result3}. We shall follow the approach by Takahashi \cite{Tak}.
\begin{lemma}\label{technical lemma 1}
Set 
\begin{equation*}
    FA_1(n,s,\alpha)=\sup_{\substack{u\in W^{s,p}(\rn)\\ [u]_{s,p,\rn}\leq 1,\;||u||_{L^p(\rn)}^p}=1}\int_{\rn}\Psi({\alpha|u(x)|^{\frac{n}{n-s}}})\;dx.
\end{equation*}
Then $FA_1(n,s,\alpha)=FA(n,s,\alpha)$ for any $\alpha>0.$
\end{lemma}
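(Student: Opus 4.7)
The plan is to prove Lemma~\ref{technical lemma 1} by a simple scaling argument that exploits the scale invariance of the Gagliardo seminorm in the critical case $sp=n$. The inequality $FA_1(n,s,\alpha)\leq FA(n,s,\alpha)$ is immediate because the supremum defining $FA_1$ is taken over a subset of the admissible set for $FA$, and on this subset $\|u\|_{L^p(\rn)}^p=1$, so the factor $1/\|u\|_{L^p(\rn)}^p$ in the definition of $FA$ is trivially equal to $1$.

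For the reverse inequality $FA(n,s,\alpha)\leq FA_1(n,s,\alpha)$, given any admissible $u\in W^{s,p}(\rn)$ with $[u]_{s,p,\rn}\leq 1$ and $u\not\equiv 0$, I would introduce the dilation $u_\lambda(x):=u(\lambda x)$ for $\lambda>0$. A change of variables yields
\[
[u_\lambda]_{s,p,\rn}^p=\lambda^{sp-n}[u]_{s,p,\rn}^p,\qquad \|u_\lambda\|_{L^p(\rn)}^p=\lambda^{-n}\|u\|_{L^p(\rn)}^p,
\]
\[
\int_{\rn}\Psi\bigl(\alpha|u_\lambda(x)|^{\frac{n}{n-s}}\bigr)\,dx=\lambda^{-n}\int_{\rn}\Psi\bigl(\alpha|u(x)|^{\frac{n}{n-s}}\bigr)\,dx.
\]
Under the critical assumption $sp=n$, the first identity gives $[u_\lambda]_{s,p,\rn}=[u]_{s,p,\rn}\leq 1$ for every $\lambda>0$. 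I then pick $\lambda=\|u\|_{L^p(\rn)}^{p/n}$, which enforces $\|u_\lambda\|_{L^p(\rn)}^p=1$, so $u_\lambda$ is admissible for the supremum defining $FA_1$. With this choice the last identity becomes
\[
\int_{\rn}\Psi\bigl(\alpha|u_\lambda(x)|^{\frac{n}{n-s}}\bigr)\,dx=\frac{1}{\|u\|_{L^p(\rn)}^p}\int_{\rn}\Psi\bigl(\alpha|u(x)|^{\frac{n}{n-s}}\bigr)\,dx,
\]
and taking the supremum over $u$ yields $FA(n,s,\alpha)\leq FA_1(n,s,\alpha)$.

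There is essentially no obstacle beyond correctly keeping track of the exponents in the change of variables; the content of the lemma lies entirely in the fact that the seminorm $[\,\cdot\,]_{s,p,\rn}$ is scale-invariant precisely when $sp=n$, so the $L^p$-constraint can be adjusted freely without affecting the seminorm constraint. One minor point to note is that the case $u\equiv 0$ is trivially handled separately (both sides return $0$), and the exclusion $u\neq 0$ in the supremum defining $FA$ is harmless since the integral and the ratio both vanish there.
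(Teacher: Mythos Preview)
Your proof is correct and follows essentially the same scaling argument as the paper: both introduce the dilation $u_\lambda(x)=u(\lambda x)$, use the scale invariance of the Gagliardo seminorm when $sp=n$, and choose $\lambda^n=\|u\|_{L^p(\rn)}^p$ to normalize the $L^p$-norm. The only difference is notational (the paper writes $\ell$ for your $\lambda$), and your treatment is slightly more explicit about the change-of-variables identities and the trivial direction $FA_1\leq FA$.
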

\begin{proof}
For $\ell>0$ and $u\in W^{s,p}(\rn)$ with $[u]_{s,p,\rn}\leq 1$, we define a function $u_{\ell}(x)=u(\ell x)$. Then by simple computations we get $||u_\ell||_{L^p(\rn)}^p=\ell^{-n}||u||_{L^p(\rn)}^p$, $[u_\ell]^p_{s,p,\rn}=[u]^p_{s,p,\rn}\leq 1$. Thus, if we choose $\ell^n=||u||^p_{L^p(\rn)}$, then $u_\ell\in W^{s,p}(\rn)$ satisfies
$$
[u_\ell]_{s,p,\rn}\leq 1\text{ and }||u_\ell||^p_{L^p(\rn)}=1.
$$
Therefore we have 
$$
\frac{1}{||u||^p_{L^p(\rn)}}\int_{\rn}\Psi({\alpha|u(x)|^{\frac{n}{n-s}}})\;dx=\int_{\rn}\Psi({\alpha|u_\ell(x)|^{\frac{n}{n-s}}})\;dx\leq FA_1(n,s,\alpha)
$$
which gives $FA(n,s,\alpha)\leq FA_1(n,s,\alpha)$ and the other inequality trivially follows.
\end{proof}
\begin{lemma}\label{techinal lemma 2}
Let $\alpha_\epsilon=\alpha^*_{s,n}-\epsilon$, for fixed $\epsilon>0$ small enough. Then for any $\alpha\in(0,\alpha_\epsilon)$ one has 
$$FA(n,s,\alpha)\leq\frac{(\frac{\alpha}{\alpha_\epsilon})^{p-1}}{1-(\frac{\alpha}{\alpha_\epsilon})^{p-1}}FB(n,s,\alpha_\epsilon).$$
\end{lemma}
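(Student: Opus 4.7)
The plan is to reduce, via Lemma~\ref{technical lemma 1}, to test functions $u \in W^{s,p}(\rn)$ satisfying both $[u]_{s,p,\rn}\le 1$ and $\|u\|_{L^p(\rn)}^p = 1$, and then to construct from any such $u$ a competitor $w$ for the definition of $FB(n,s,\alpha_\epsilon)$ by composing dilation in the $x$-variable with multiplication by a suitable scalar. Concretely, setting $r := (\alpha/\alpha_\epsilon)^{p-1} \in (0,1)$, I would define
\[
    w(x) := \mu\, u(tx), \qquad \mu^p := r, \qquad t^n := \frac{r}{1-r},
\]
and show that $w$ is admissible for $FB(n,s,\alpha_\epsilon)$, i.e.\ $\|w\|_{L^p(\rn)}^p + [w]_{s,p,\rn}^p \leq 1$.

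The two normalizations I need are the standard scalings of the two terms in the Sobolev norm. First, a direct change of variables gives $\|w\|_{L^p(\rn)}^p = \mu^p t^{-n}\|u\|_{L^p(\rn)}^p$. Second, using $sp=n$, the Gagliardo seminorm is scale-invariant under $x\mapsto tx$, so $[w]_{s,p,\rn}^p = \mu^p [u]_{s,p,\rn}^p$. Putting these together with $\|u\|_p^p = 1$ and $[u]_{s,p,\rn}^p \leq 1$,
\[
\|w\|_{L^p(\rn)}^p + [w]_{s,p,\rn}^p = \mu^p\bigl(t^{-n}\|u\|_p^p + [u]_{s,p,\rn}^p\bigr) \leq r\Bigl(\tfrac{1-r}{r} + 1\Bigr) = 1,
\]
by the choices of $\mu$ and $t$.

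The key algebraic point is the identity $n/(n-s) = p/(p-1)$ (again from $sp = n$), which gives $\mu^{n/(n-s)} = \mu^{p/(p-1)} = r^{1/(p-1)} = \alpha/\alpha_\epsilon$. Hence
\[
\alpha_\epsilon |w(x)|^{\frac{n}{n-s}} = \alpha_\epsilon\, \mu^{\frac{n}{n-s}} |u(tx)|^{\frac{n}{n-s}} = \alpha\,|u(tx)|^{\frac{n}{n-s}}.
\]
Plugging this into the integral defining $FB$ and changing variables $y = tx$ yields
\[
\int_{\rn} \Psi\bigl(\alpha_\epsilon |w(x)|^{\frac{n}{n-s}}\bigr) dx = t^{-n}\int_{\rn}\Psi\bigl(\alpha |u(y)|^{\frac{n}{n-s}}\bigr) dy.
\]
Since $w$ is admissible for $FB(n,s,\alpha_\epsilon)$, the left-hand side is bounded by $FB(n,s,\alpha_\epsilon)$; multiplying by $t^n = r/(1-r)$ and then taking the supremum over admissible $u$ gives
\[
FA_1(n,s,\alpha) \leq \frac{r}{1-r} FB(n,s,\alpha_\epsilon) = \frac{(\alpha/\alpha_\epsilon)^{p-1}}{1-(\alpha/\alpha_\epsilon)^{p-1}} FB(n,s,\alpha_\epsilon),
\]
and an appeal to Lemma~\ref{technical lemma 1} replaces $FA_1$ by $FA$, finishing the proof.

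There is no genuine obstacle in this argument: once one recognizes the two scalings and the relation $n/(n-s) = p/(p-1)$, the choices of $\mu$ and $t$ are dictated uniquely by wanting (i) the exponent inside $\Psi$ to match, and (ii) the full Sobolev norm of $w$ to land on the admissible sphere. The only delicate piece is to make sure that the reduction to $\|u\|_p = 1$ (Lemma~\ref{technical lemma 1}) is in place before running the scaling argument, since the bound $\|u\|_p^p + [u]_{s,p,\rn}^p \leq 2$ is essential to the choice $t^n = r/(1-r)$.
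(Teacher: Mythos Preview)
Your proof is correct and follows exactly the same approach as the paper: reduce to $\|u\|_{L^p}=1$, $[u]_{s,p,\rn}\le 1$ via Lemma~\ref{technical lemma 1}, then rescale $w(x)=\mu\,u(tx)$ with $\mu^p=(\alpha/\alpha_\epsilon)^{p-1}$ and $t^n=\mu^p/(1-\mu^p)$ so that $w$ is admissible for $FB(n,s,\alpha_\epsilon)$ and the exponents match. Only your closing remark is slightly off---what matters is the \emph{exact} normalization $\|u\|_p^p=1$ (not merely $\|u\|_p^p+[u]_{s,p,\rn}^p\le 2$), since $t^{-n}\|u\|_p^p=(1-r)/r$ uses that equality.
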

\begin{proof}
Let $u\in W^{s,p}(\rn)$ with $||u||_{L^p(\rn)}=1$ and $[u]_{s,p,\rn}\leq 1.$ Then, we consider a function $v(x)=Cu(\ell x)$, where $C^p=(\frac{\alpha}{\alpha_\epsilon})^{p-1}$ and $\ell^n=\frac{C^p}{1-C^p}.$ By simple computation we obtain $||v||^p_{L^p(\rn)}=C^p\ell^{-n}$ and $[v]^p_{s,p,\rn}=C^p[u]^p_{s,p,\rn}$. Therefore we have
\begin{equation*}
    ||v||^p_{W^{s,p}(\rn)}=||v||^p_{L^p(\rn)}+[v]^p_{s,p,\rn}\leq C^p(1+\ell^{-n})=1.
\end{equation*}
Now 
\begin{align*}
    FB(n,s,\alpha_\epsilon)\geq\int_{\rn}\Psi({\alpha_\epsilon|v(x)|^{\frac{n}{n-s}}})\;dx
    &=
    \int_{\rn}\Psi({\alpha_\epsilon\;C^{\frac{p}{p-1}}|u(\ell x)|^{\frac{n}{n-s}}})\;dx\\
    &=
    \ell^{-n}\int_{\rn}\Psi({\alpha|u(x)|^{\frac{n}{n-s}}})\;dx.
\end{align*}
Taking the supremum for $u\in W^{s,p}(\rn)$ with $||u||_{L^p(\rn)}=1$ and $[u]_{s,p,\rn}\leq 1$, together with Lemma \ref{technical lemma 1} gives the desired result.
\end{proof}
\smallskip

\noindent\textit{\textbf{Proof of Theorem \ref{result3} :}}  It is easy to see that $FA(n,s,\alpha)<\infty$ for $\alpha<\alpha^*_{s,n}$ by using Lemma \ref{techinal lemma 2} and \eqref{zhang result}.\\
In order to proof of $FA(n,s,\alpha^*_{s,n})=\infty.$ We use the Moser-sequence of functions which is defined in \eqref{moser sequence}. Then we have the following estimates as $\epsilon\to 0$, for some $C>0$
\begin{align*}
    ||u_\epsilon||^p_{L^p(\rn)}=O\bigg(\frac{1}{\log\frac{1}{\epsilon}}\bigg),\\
    [u_\epsilon]^p_{s,p,\rn}=\gamma_{s,n}+O(1),\\
     \frac{[u_\epsilon]^p_{s,p,\rn}}{\gamma_{s,n}}\leq 1+\frac{C}{\log\frac{1}{\epsilon}}.
\end{align*}
Now set $v_\epsilon(x)=\frac{u_\epsilon(x)}{[u_\epsilon]_{s,p,\rn}}$, then by using the above estimates we acquire 
\begin{align*}
    FA(n,s,\alpha^*_{s,n})
    \geq\frac{1}{||v_\epsilon||^p_{L^p(\rn)}}\int_{\rn}& \Psi({\alpha^*_{s,n}|v_\epsilon(x)|^{\frac{n}{n-s}}})\;dx\nonumber\\
    &\geq\frac{[u_\epsilon]^p_{s,p,\rn}}{2||u_\epsilon||^p_{L^p(\rn)}}\int_{|x|\leq\epsilon}e^{\big(\frac{\gamma_{s,n}}{||u_\epsilon||^p_{W^{s,p}(\rn)}}\big)^{\frac{1}{p-1}}n u_\epsilon^{\frac{n}{n-s}}}dx\nonumber\\
    &\geq\frac{\gamma_{s,n}+O(1)}{O\big(\frac{1}{\log\frac{1}{\epsilon}}\big)}\int_{|x|\leq\epsilon}e^{\frac{n\log\frac{1}{\epsilon}}{(1+\frac{C}{\log\frac{1}{\epsilon}})^{\frac{1}{p-1}}}}dx\nonumber\\
    &=C_1(n)\frac{\gamma_{s,n}+O(1)}{O\big(\frac{1}{\log\frac{1}{\epsilon}}\big)}\;\;e^{\frac{n\log\frac{1}{\epsilon}}{(1+\frac{C}{\log\frac{1}{\epsilon}})^{\frac{1}{p-1}}}-n\log\frac{1}{\epsilon}}\to\infty
\end{align*}
as $\epsilon\to 0.$ This completes the proof of the theorem.
\smallskip

\textbf{Acknowledgement:} The author would like to thank his advisor Prof. Prosenjit Roy for his encouragement on the subject. The author would also like to thank Prof. Gyula Csat\'o for fruitful discussions and comments on this research.

\end{document}